\documentclass[12pt]{amsart}
\usepackage{amsmath,amssymb,amsthm}    
\usepackage{mathabx}    
\usepackage{mathrsfs}    	
\usepackage[backref,colorlinks]{hyperref}
% backref makes bibliography items to refer to the pages in text where they are cited
%\usepackage[colorlinks]{hyperref}  
\usepackage[abbrev,msc-links]{amsrefs}  
\usepackage{enumerate}
\usepackage{enumitem}
\usepackage[symbol]{footmisc}
\usepackage[symbol]{footmisc}
\usepackage{pgf,tikz}
\usepackage{float}
\usepackage{mathrsfs}
\usetikzlibrary{arrows}
\usepackage{rotating}
\usetikzlibrary{calc}
\usetikzlibrary{positioning}
\usetikzlibrary{patterns}
\usetikzlibrary{shapes}
\usetikzlibrary{arrows}
\usetikzlibrary{snakes}
\usepackage{caption}
\usepackage{subcaption}

%%%%%%%%%%%%%%%%%%%%%%%%%%%%%%%%%%%%%%%%%%%%%%%%%%%%%%%%%%%%%%%%
%%%%%%%%  Page Layout

%\parindent=0pt
\linespread{1.3}
\normalsize                              
\setlength{\hoffset}{-1.6cm}
\addtolength{\textwidth}{4cm}
\addtolength{\voffset}{-0.5cm}
\addtolength{\textheight}{1cm}

%%%%%%%%%%%%%%%%%%%%%%%%%%%%%%%%%%%%%%%%%%%%%%%%%%%%%%%%%%%%%%%%

\newtheorem{theorem}{Theorem}[section]
\newtheorem{definition}[theorem]{Definition}
\newtheorem{lemma}[theorem]{Lemma}
\newtheorem{claim}[theorem]{Claim}

\newtheorem{remark}[theorem]{Remark}

\newtheorem{conjecture}[theorem]{Conjecture}

\def\dHH#1{\leavevmode\setbox0=\hbox{#1}\dimen0=\wd0\setbox0=\hbox{.}%
	\advance\dimen0 by -\wd0%
	\hbox{#1\raise-0.5ex\hbox to 0pt{\hss.\kern.5\dimen0}}}%

\begin{document}
	\title[Every Steiner triple system contains an almost spanning \MakeLowercase{d}-ary hypertree]{Every Steiner triple system contains an almost spanning \MakeLowercase{d}-ary hypertree}
		
	\author[A. Arman]{Andrii Arman}
	\address{Department of Mathematics, Emory University, Atlanta, GA 30322, USA}
	\email{andrii.arman@emory.edu}
	\thanks{}
	
	\author[V. R\"{o}dl]{Vojt\v{e}ch R\"{o}dl}
	\address{Department of Mathematics, 
		Emory University, Atlanta, GA 30322, USA}
	\email{rodl@mathcs.emory.edu}
	\thanks{The second  author was supported by NSF grant DMS 1764385}

	\author[M. T. Sales]{Marcelo Tadeu Sales}
	\address{Department of Mathematics, 
		Emory University, Atlanta, GA 30322, USA}
	\email{marcelo.tadeu.sales@emory.edu}
	%\keywords{Matchings, Rainbow matchings}
	%\subjclass[2010]{05B07, 05C70, 05D40}
	
	%%%%%%%%%%%%%%%%%%%%%%%%%%%%%%%%%%%%%%%%%%%%%%%%%%%%%%%%%%%%
	%%%%%%%%%%%%%%%%%%%%%%%  ABSTRACT   %%%%%%%%%%%%%%%%%%%%%%%%
	%%%%%%%%%%%%%%%%%%%%%%%%%%%%%%%%%%%%%%%%%%%%%%%%%%%%%%%%%%%%
	
	\begin{center}
		\today
	\end{center}
	\begin{abstract} 
		In this paper we make a partial progress on the following conjecture: for every $\mu>0$ and large enough $n$, every Steiner triple system $S$ on at least $(1+\mu)n$ vertices contains every hypertree $T$ on $n$ vertices. We prove that the conjecture holds if $T$ is a perfect $d$-ary hypertree.  
	\end{abstract}

	\maketitle

	%%%%%%%%%%%%%%%%%%%%%%%%%%%%%%%%%%%%%%%%%%%%%%%%%%%%%%%%%%%%
	%%%%%%%%%%%%%%%%%%%%%%%  Introduction   %%%%%%%%%%%%%%%%%%%%
	%%%%%%%%%%%%%%%%%%%%%%%%%%%%%%%%%%%%%%%%%%%%%%%%%%%%%%%%%%%%
	
	\section{Introduction}
	In this paper we study the following conjecture, raised by second author and Bradley Elliot~\cite{ER}.
	
	\begin{conjecture}\label{conj:ER}
		Given $\mu>0$ there is $n_0$, such that for any $n\geq n_0$, any hypertree $T$ on $n$ vertices and any Steiner triple system $S$ on at least $(1+\mu)n$ vertices, $S$ contains $T$ as subhypergraph.
	\end{conjecture}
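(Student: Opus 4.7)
Fix $\mu > 0$ and set $\eps \ll \mu$. Let $N = |V(S)| \geq (1+\mu)n$. The natural plan is a two-stage embedding: a greedy phase placing the bulk of $T$, followed by an absorbing phase completing a small set of leaf edges.

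Root $T$ at an arbitrary vertex $r$ and order its edges $e_1, \ldots, e_{(n-1)/2}$ in BFS order; each $e_i$ has a unique parent vertex $p_i$ appearing in $e_1, \ldots, e_{i-1}$ (with $p_1 = r$). Identify a set $L \subset E(T)$ of roughly $\eps n/2$ leaf edges of $T$ to be embedded last, and sample a random reservoir $R \subset V(S)$ with $|R| \approx 2\eps n$. In the greedy phase, embed $E(T) \setminus L$ in BFS order into $V(S) \setminus R$: at step $i$, the image $\sigma(p_i)$ is already fixed and one chooses a uniformly random available pair in the link of $\sigma(p_i)$ (a perfect matching of size $(N-1)/2$), avoiding $R$ and previously used vertices. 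In the absorbing phase, each remaining leaf $\ell \in L$ with parent $p_\ell$ is matched to a pair $\{x,y\} \subset R$ with $\{\sigma(p_\ell), x, y\}$ a triple of $S$. Since $R$ is chosen randomly, every vertex has about $|R|/2$ available pairs in its link into $R$, so Hall's condition can be verified and the matching exists.

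The principal obstacle is intrinsic to the Steiner triple system structure: the link of every vertex $v$ is a perfect matching of $V(S) \setminus \{v\}$, so each used vertex blocks \emph{exactly one} pair in every other link. Thus the worst-case and actual bounds on available pairs coincide, and near the end of the greedy phase the count of available pairs in a link is approximately $\tfrac{N-1}{2} - (1-\eps)n - |R| \approx (\mu - 1)n/2$, which is positive only for $\mu > 1$. Hence a naive greedy embedding succeeds only when $\mu > 1$, and the hard range $0 < \mu \leq 1$ of the conjecture requires a genuinely different strategy.

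For perfect $d$-ary hypertrees, the fully symmetric level-by-level structure of $T$ allows one to embed entire levels simultaneously using perfect matchings in auxiliary bipartite graphs, bypassing the limitations of edge-by-edge greedy. Extending the argument to arbitrary hypertrees appears to demand either a structural decomposition of $T$ into approximately $d$-ary regular pieces that can be embedded one at a time by the result of this paper, or a rotation/switching argument that locally repairs partially constructed embeddings when the link-availability invariant fails. In particular, the design of a robust absorbing gadget that tolerates arbitrary leaf-attachment patterns — including hypertrees with a few vertices of near-linear degree, where the absorbing phase would otherwise require nearly all pairs inside $R$ at a single link — is, in my view, the central open problem underlying the full conjecture.
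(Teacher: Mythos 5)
The statement you were given is Conjecture~1.1, which the paper itself does \emph{not} prove: the paper only establishes the special case of perfect $d$-ary hypertrees (Theorem~1.3), and explicitly describes its contribution as partial progress. So there is no proof in the paper to compare yours against, and your proposal --- which candidly stops short of a proof and declares the general case open --- is the honest and correct response. Your diagnosis of the obstacle is accurate and matches the paper's own remark that a greedy embedding only handles $|V(T)|\leq\frac{1}{2}(|V(S)|+3)$, i.e.\ essentially $\mu\geq 1$: since the link of each vertex of an STS is a perfect matching, each previously used vertex kills exactly one pair in every link, and the naive count of surviving pairs is $\approx(\mu-1)n/2$, which is useless for $\mu\leq 1$.

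One small correction on your description of the $d$-ary case: the paper does not use perfect matchings in auxiliary bipartite graphs. It takes a random partition $V(S)=C_0\sqcup\cdots\sqcup C_t\sqcup R$, embeds the first few (small) levels greedily into $C_0$, and then, for each pair of consecutive levels, applies near-perfect matching theorems for almost-regular simple $3$-uniform hypergraphs (Pippenger--Spencer/Frankl--R\"odl and Alon--Kim--Spencer) to the link hypergraphs $S[C_{i-1},C_i]$, after a vertex-splitting trick that converts a packing by hyperstars of size at most $d$ into a matching problem. The few vertices missed by these star packings are repaired at the end using the reservoir $R$, which works precisely because the missing subtrees are small --- this is where the regularity of the $d$-ary tree is used, and where your worry about vertices of near-linear forward degree in a general hypertree correctly identifies a genuine difficulty.
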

	Note that any hypertree $T$ can be embedded into any Steiner triple system $S$, provided $|V(T)|\leq \frac{1}{2}\left(|V(S)|+3\right)$. The problem becomes more interesting if the size of the tree is larger. In~\cite{ARS} (see also~\cite{ER} Section 5) Conjecture~\ref{conj:ER} was verified for some special classes of hypertrees. In this paper we verify the conjecture for another class of hypertrees -- perfect $d$-ary hypertrees.
	
	\begin{definition}
	A perfect $d$-ary hypertree $T$ of height $h$ is a hypertree $T$ with $V(T)=\bigcup_{i=0}^h V_i$, such that $|V_i|=(2d)^i$ for all $i\in[0,h]$ and for each $i\in[0,h-1]$ and $v\in V_i$ there are $d$ disjoint triplets $\{v,u_j, w_j\}$ with $u_j,w_j\in V_{i+1}$, $j\in[d]$.
	\end{definition}
	In other words, $T$ is a perfect $d$-ary hypertree if every non-leaf vertex has $2d$ children (or a forward degree $d$). The main result of this paper is the following theorem.
	
	\begin{theorem}\label{thm:main}
		For any real $\mu>0$ there is $n_0$ such that the following holds for all $n\geq n_0$ and any positive integer $d$. If $S$ is a Steiner triple system with at least $(1+\mu)n$ vertices and $T$ is a perfect $d$-ary hypertree on at most $n$ vertices, then $T\subseteq S$.
	\end{theorem}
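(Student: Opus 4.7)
The approach is to split the embedding of $T$ into $S$ into two stages: first embed the internal (non-leaf) vertices, then embed the leaves. A preliminary observation is that the number of internal vertices $|I| := |V(T) \setminus V_h| = \frac{(2d)^h - 1}{2d-1}$ is small compared to the tree size; one checks from $|V(T)| \le n$ that $|I| \le \frac{n-1}{2d}$, leaving considerable room in $V(S)$.

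\emph{Stage 1 (internal skeleton).} I would proceed by greedy BFS: fix the root by setting $\phi(\mathrm{root}) = r$ for an arbitrary $r \in V(S)$, and for each already-embedded $v \in V_i$ with $i < h-1$ and each of $v$'s $d$ child-pairs $\{u_j, w_j\}$, pick an available edge $\{x, y\}$ of the link matching $L_{\phi(v)}$ (a perfect matching of size $(N-1)/2$ on $V(S) \setminus \{\phi(v)\}$), and set $\phi(u_j) := x$, $\phi(w_j) := y$. At any such step, at most $|I| \le (n-1)/(2d)$ vertices are used, blocking at most that many edges of $L_{\phi(v)}$, so at least $(N-1)/2 - |I| \ge \mu n /2 - O(1)$ matching edges remain available. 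Hence Stage 1 succeeds for $n$ sufficiently large.

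\emph{Stage 2 (leaves).} Set $A := \phi(V_{h-1})$ with $|A| = (2d)^{h-1}$, and $W := V(S) \setminus \phi(I)$ with $|W| \ge (2d)^h + \mu n$. For each $p \in A$, let $M_p$ be the matching consisting of edges of the link $L_p$ with both endpoints in $W$; one has $|M_p| \ge (N-1)/2 - |I| + 1 \ge \mu n /2$. The task is to find, for each $p \in A$, a subset $E_p \subseteq M_p$ with $|E_p| = d$, such that $\bigcup_{p \in A} E_p$ is a vertex-disjoint collection of pairs in $W$; assigning the endpoints of each $E_p$ to the $d$ leaf-child-pairs of $\phi^{-1}(p)$ then completes the embedding of $T$. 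Equivalently, this amounts to finding a matching in the auxiliary 3-uniform hypergraph $\mathcal H$ on $A \cup W$ whose hyperedges are the triples of $S$ with one vertex in $A$ and two in $W$, that is $d$-regular on the $A$-side and at most $1$-regular on the $W$-side.

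The main obstacle is Stage 2. Standard rainbow matching arguments do not apply directly, since the matchings $M_p$ may be considerably smaller than $|A|$. The intended approach exploits the quasi-random structure of Steiner triple systems: for every $B \subseteq A$, one aims to show that the matching number of $\bigcup_{p \in B} M_p$ is at least $d|B|$, which yields a Hall-type condition for the desired $d$-regular matching. For small $B$ this follows from the trivial bound $\nu \ge |E|/\Delta \ge \min_p |M_p|$; for large $B$ one must invoke pseudo-random properties of $S$ or a probabilistic/nibble-style argument to push $\nu$ up to $\approx |W|/2$. An alternative route is to reserve at the outset a small ``flex'' subset $R \subseteq V(S)$ of size $\Theta(\mu n)$ which is then used to absorb any residual defect left by a random or greedy leaf-placement. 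The slack $\mu n$ between $|W|$ and $(2d)^h$ is precisely what allows such an argument to succeed.
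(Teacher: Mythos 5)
Your Stage 1 is fine: the internal skeleton has only about $n/(2d)\le n/2$ vertices, so a greedy link-matching embedding succeeds exactly as you say (the paper makes the same observation, that any hypertree on at most $\tfrac12(|V(S)|+3)$ vertices embeds greedily). The problem is that Stage 2 is the entire content of the theorem, and your proposal does not prove it --- it names three possible strategies (a Hall-type condition verified via pseudo-randomness, a nibble, an absorption/flex set) without carrying out any of them. Worse, the specific reduction you set up makes the task harder than it needs to be. After an arbitrary greedy Stage 1, the auxiliary hypergraph $\mathcal H$ on $A\cup W$ is very far from regular: a vertex $w\in W$ has degree $|\{p\in A:$ the third vertex of the triple through $\{p,w\}$ lies in $W\}|$, which can be anywhere between $0$ and $|A|$ since $|I|>|A|$, and the degrees on the $A$-side vary over an interval of length $\approx|I|=\Theta(n/d)$. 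Nibble-type theorems (Pippenger--Spencer, Alon--Kim--Spencer) require near-regularity, so they do not apply to your $\mathcal H$ as constructed; and no off-the-shelf Hall-type theorem gives a $d$-fold rainbow matching from matchings $M_p$ of size $\Theta(\mu n)$ when $|A|=\Theta(n/d)\gg\mu n$. Finally, even if a nibble applied, it would yield a packing missing a small fraction of \emph{all} vertices, including centers in $A$, whereas you need every $p\in A$ to receive exactly $d$ disjoint pairs; closing that deficiency is itself a nontrivial absorption step.

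For comparison, the paper resolves precisely these issues as follows. It first takes a \emph{random} partition $V(S)=C_0\sqcup C_1\sqcup\dots\sqcup C_t\sqcup R$ with class sizes proportional to the sizes of the large levels of $T$ plus a reservoir $R$ of size $\Theta(\sqrt{\rho}\,m)$; Chernoff bounds show that the bipartite-type link hypergraphs $S[C_{i-1},C_i]$ are nearly regular on both sides (degrees $\approx dp_i\ell_{i-1}$ on one side and $\approx p_i\ell_{i-1}$ on the other). Only the first few levels (of total size $O(\varepsilon n)$) are embedded greedily. Each subsequent large level is then covered by a near-perfect packing of stars of size at most $d$, obtained by splitting each center into $d$ clones and applying Pippenger--Spencer (for the first transition, where a few irregular degrees must be tolerated) and Alon--Kim--Spencer (for the later ones). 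The union of these star forests contains a subhypertree $T_1$ with $(1-O(\delta))|E(T)|$ edges --- the uncovered vertices orphan whole subtrees, whose total size is controlled because the number of large levels is $O(\log(1/\varepsilon))$ --- and the missing $O(\delta)|E(T)|$ edges are re-embedded greedily into the reservoir $R$, using that every vertex has $\ge\rho m$ link edges inside $R$. If you want to salvage your two-stage outline, you would at minimum need to (i) choose $A$ and $W$ randomly rather than arbitrarily so as to obtain the degree concentration needed for a nibble, and (ii) actually implement the reservoir step that repairs the centers the nibble misses; at that point you have essentially reconstructed the paper's argument.
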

	
	\begin{remark}

	One can consider the following extension of Theorem~\ref{thm:main}. In a perfect $d$-ary hypertree label children of every vertex with numbers $\{1,\ldots, 2d\}$. Then every leaf can be identified with a sequence $\{a_1, \ldots, a_h\}\in [2d]^{h}$ based on the way that leaf was reached from the root. We say that $T$ is an almost perfect $d$-ary hypertree if $T$ is obtained from a perfect $d$-ary hypertree by removing the smallest $2t$ leafs in lexicographic order (for some integer $t$)

	With essentially same proof as of Theorem~\ref{thm:main}, for a fixed $d$ any sufficiently large Steiner triple system $S$ contains any almost perfect $d$-ary hypertree $T$ with $|V(T)|\leq |V(S)|/(1+\mu)$. For more extensions of Theorem~\ref{thm:main}, see Section~\ref{sec:remarks}.
	\end{remark}
	
	%%%%%%%%%%%%%%%%%%%%%%%%%%%%%%%%%%%%%%%%%%%%%%%%%%%%%%%%%%%%
	%%%%%%%%%%%%%%%%%%%%%%%  Preliminaries  %%%%%%%%%%%%%%%%%%%%
	%%%%%%%%%%%%%%%%%%%%%%%%%%%%%%%%%%%%%%%%%%%%%%%%%%%%%%%%%%%%	
	
	\section{Preliminaries}
	For positive integer $k$ let $[k]=\{1,\ldots, k\}$ and for positive integers $k<\ell$ let $[k,\ell]=\{k, k+1, \ldots, \ell\}$. We write $x=y\pm z$ if $x\in[y-z,y+z]$. We write $A=B\sqcup C$ if $A$ is a union of disjoint sets $B$ and $C$. 
	
	A hypertree is a connected, simple (linear) $3$‐uniform hypergraph in which every two vertices are joined by a unique path. A hyperstar $S$ of size $a$ centered at $v$ is a hypertree on vertex set $v, v_1, v_2, \ldots, v_{2a}$ with edge set $E(S)=\{\{v,v_{2i-1}, v_{2i}\} : i\in[a]\}$. A Steiner triple system (STS) is a $3$‐uniform hypergraph in which every pair of vertices is	contained in exactly one edge.
		
	If $H$ is a hypergraph and $v\in V(H)$, then $d_{H}(v)$ (or $d(v)$ when the context is clear) is the degree of a vertex $v$ in $H$. 
	
	For $V(H)=X\sqcup Y$ we denote by $H[X,Y]$ the spanning subhypergraph of $H$ with $$E(H[X,Y])=\{e\in E(H): |e\cap X|=1, |e\cap Y|=2\}.$$ 
		
	The proof of Theorem~\ref{thm:main} relies on the application of an existence of almost perfect matching in an almost regular 3-uniform simple hypergraph. We will use two versions of such results. In the first version the degrees of a small proportion of vertices are allowed to deviate from the average degree. We will use Theorem 4.7.1 from~\cite{AS} (see~\cite{FR} and~\cite{PS} for earlier versions). 
	\begin{theorem}\label{thm:FR}
		For any $\delta>0$ and $k>0$ there exists $\varepsilon$ and $D_0$ such that the following holds. Let $H$ be a 3-uniform simple hypergraph on $N$ vertices and $D\geq D_0$ be such that
		\begin{itemize}
			\item[(i)] for all but at most $\varepsilon N$ vertices $x$ of $H$ the degree of $x$		$$d(x)=(1\pm\varepsilon)D.$$
			\item[(ii)] for all $x\in V(H)$ we have $$d(x)\leq kD.$$ 
		\end{itemize}
		Then $H$ contains a matching on at least $N(1-\delta)$ vertices. 
	\end{theorem}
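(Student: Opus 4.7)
I would prove Theorem~\ref{thm:FR} via the R\"odl semi-random method (the ``nibble''), following the proof scheme of Alon--Spencer. The matching is grown in a constant number of rounds $T = T(\delta, k)$: in each round a random sparse subfamily of the current edges is selected, an almost-matching is extracted from it, the matched vertices and all incident edges are deleted, and the process recurses on the residual hypergraph. Before starting the nibble I would dispose of the at most $\varepsilon N$ irregular vertices; a standard averaging argument, using simplicity of $H$ and the bound $d(x)\leq kD$, shows that after deleting them and possibly discarding a further $O(\sqrt{\varepsilon k})$-fraction of vertices that lost too many edges, one has a simple 3-uniform hypergraph on at least $(1-\delta/2)N$ vertices in which all degrees are $(1\pm\varepsilon')D$ and still bounded by $kD$. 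Any vertex discarded in this preprocessing will simply be charged against the $\delta N$ uncovered budget.

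The core of the argument is the analysis of one nibble round. Fix a tiny $\gamma=\gamma(\delta,k)$. In the current hypergraph $H^{(i)}$ with typical degree $D^{(i)}$, select each edge independently with probability $p=\gamma/D^{(i)}$ to form a random set $E^*$, and let $M^{(i)}\subseteq E^*$ consist of the edges of $E^*$ that share no vertex with any other edge of $E^*$. Because $H^{(i)}$ is simple, a direct expectation computation (using that each edge meets precisely $\approx 3D^{(i)}$ other edges) shows
\begin{itemize}
\item each vertex $v$ is covered by $M^{(i)}$ with probability $(1-o(1))\gamma e^{-3\gamma}$, uniformly in $v$; and
\item each surviving $v$ has expected degree $(1-o(1))D^{(i)}\cdot c(\gamma)$ in the residual hypergraph $H^{(i+1)}$, for an explicit constant $c(\gamma)<1$.
\end{itemize}
Hence one round removes a positive fraction of vertices and leaves the rest almost-regular, with typical degree rescaled by $c(\gamma)$.

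The main technical obstacle is concentration. The degree and indicator functions above depend on the independent Bernoulli variables $\{X_e:e\in E(H^{(i)})\}$; altering a single $X_e$ changes any such function by only $O(kD^{(i)})$, since in a simple 3-uniform hypergraph each edge is adjacent to at most $3kD^{(i)}$ others (here assumption~(ii) is used essentially). Applied to the edge-exposure martingale, the Azuma--Hoeffding inequality yields deviations of order $1/\sqrt{D^{(i)}}$, much smaller than $\varepsilon'$ once $D\geq D_0$ is large enough, and a union bound over vertices and over the $T$ rounds transfers the expectations above into almost-sure statements. Iterating $T$ rounds drives the uncovered vertex set below $\delta N/2$, which together with the preprocessed bad vertices yields a matching covering at least $(1-\delta)N$ vertices. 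The principal delicate point is sequencing the exposure of the edge-variables so that \emph{every} function whose concentration is needed in a given round is Lipschitz with the same small constant, and verifying that $D^{(i)}$ remains large through all $T$ rounds so that the error terms do not accumulate.
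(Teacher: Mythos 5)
First, a point of comparison: the paper does not prove Theorem~\ref{thm:FR} at all --- it is quoted as Theorem 4.7.1 of \cite{AS} (with earlier versions in \cite{FR} and \cite{PS}) and used as a black box. So there is no in-paper argument to measure yours against; your sketch has to stand on its own. Its overall strategy (the semi-random ``nibble'') is indeed the one used in those sources, and your single-round expectation computations (cover probability $\approx \gamma e^{-3\gamma}$, residual degree rescaled by an explicit $c(\gamma)<1$, both relying on simplicity to make each edge meet $\approx 3D$ others) are correct in outline.

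The genuine gap is the concentration step, which is exactly the delicate point of the known proofs and which, as you have written it, fails. For the residual degree of a vertex $v$, flipping a single indicator $X_e$ can in the worst case change the function by $\Theta(kD^{(i)})$: the edge $e$ can evict from $M^{(i)}$ up to $3kD^{(i)}$ previously isolated edges, changing the covered status of that many vertices. Azuma--Hoeffding with worst-case Lipschitz constant $L=O(kD)$ over the $m=\Omega(D^2)$ relevant edge-variables gives deviations of order $L\sqrt{m}\gg D$, i.e., nothing useful; it certainly does not yield relative error $O(D^{-1/2})$, and no union bound over $N$ vertices can be extracted from it. The standard escape --- the one taken in the proof of Theorem 4.7.1 in \cite{AS} --- is to \emph{not} prove per-vertex concentration: one computes the expectation and variance of each residual degree (simplicity controls the covariance terms), applies Chebyshev per vertex and Markov to the \emph{number} of bad vertices, and notes that hypothesis (i) of the theorem is designed precisely to tolerate a small fraction of irregular vertices in every round. (Alternatively one can invoke a ``typical Lipschitz'' inequality such as Talagrand's or Kim--Vu, using that flipping $X_e$ typically changes the degree by $O(1)$; but plain Azuma with worst-case differences does not close the argument.) A secondary issue: your preprocessing cannot actually produce a hypergraph in which \emph{all} degrees are $(1\pm\varepsilon')D$, since deleting the irregular vertices perturbs the degrees of the survivors and the cleaning cascades; the correct move is to skip it and carry the weaker invariant ``all but an $\varepsilon$-fraction of vertices regular, all degrees at most $kD$'' through every round, which is again what the statement of Theorem~\ref{thm:FR} is built to permit.
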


	A second version is a result by Alon, Kim and Spencer~\cite{AKS}, where under assumption that all degrees are concentrated near the average, a stronger conclusion may be drawn. We use a version of this result as stated in~\cite{KR}\footnote{We refer to Theorem 3 from that paper. There is a typo in the conclusion part of that theorem, where instead of $O(ND^{1/2}\ln^{3/2}D)$ there should be $O(ND^{-1/2}\ln^{3/2}D)$}.
	\begin{theorem}\label{thm:AKS}
		For any $K>0$ there exists $D_0$ such that the following holds. Let $H$ be a 3-uniform simple hypergraph on $N$ vertices and $D\geq D_0$ be such that $deg(x)=D\pm K\sqrt{D \ln D}$ for all $x\in V(H)$. Then $H$ contains a matching on $N-O(ND^{-1/2}\ln^{3/2}D)$ vertices. 
	\end{theorem}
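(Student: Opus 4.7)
The plan is to embed $T$ into $S$ top-down, level by level. Pick an arbitrary vertex $\rho \in V(S)$ as the image of the root. Inductively, suppose levels $V_0, V_1, \ldots, V_{i-1}$ of $T$ have been embedded, with $F_j \subseteq V(S)$ denoting the image of $V_j$; write $P_{i-1} = F_0 \cup \cdots \cup F_{i-1}$ and $U_{i-1} = V(S) \setminus P_{i-1}$. Extending to level $i$ requires, for every $v \in F_{i-1}$, choosing $d$ pairwise disjoint pairs $\{a, b\} \subseteq U_{i-1}$ with $\{v, a, b\} \in E(S)$, and moreover all $d \cdot |F_{i-1}|$ such pairs (across varying $v$) should be pairwise vertex-disjoint.

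The natural way to carry this out is to reformulate it as a near-perfect matching problem. Let $U^*_{i-1} \subseteq U_{i-1}$ be a random subset whose size is chosen close to $|V_i|$ so that both sides of the resulting bipartition have comparable average degrees. Define the auxiliary 3-uniform hypergraph $H^*_i$ with vertex set $(F_{i-1} \times [d]) \sqcup U^*_{i-1}$ and edge set $\{\{(v,j), a, b\} : \{v, a, b\} \in E(S),\ a, b \in U^*_{i-1}\}$. Simplicity of $H^*_i$ is immediate from the Steiner triple property: any pair of vertices in $V(S)$ extends to a unique edge of $S$. A Chernoff plus union-bound argument over the random choice of $U^*_{i-1}$ shows that, for the right size, all vertices of $H^*_i$ have degree close to a common value $D_i$, placing us within the hypotheses of Theorem \ref{thm:AKS}. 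The resulting matching $M_i$ covers all but $O(|V_i| D_i^{-1/2} \ln^{3/2} D_i)$ vertices of $F_{i-1} \times [d]$, and the few $v$'s with unmatched copies are then repaired greedily using vertices of $U_{i-1} \setminus U^*_{i-1}$, which is plentiful while $i < h$.

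The crux of the argument is the final level $i = h$. By this point $P_{h-1}$ occupies all but roughly $|V_h| + \mu n$ vertices of $V(S)$, so the link of any $v \in F_{h-1}$ is heavily eroded. After the bulk matching places another $\approx |V_h|$ vertices of $U$ into the used set, the link of $v$ restricted to the $O(\mu n)$ truly unused vertices can, in the worst case, be empty when $\mu < 1$, so naive greedy patching will not work. To overcome this, we exploit the sharp conclusion of Theorem \ref{thm:AKS}: the number of defects in $M_h$ is sublinear in $|V_h|$. These defects are then repaired by short augmenting-path rewirings inside $M_h$, which release pairs of $U^*_{h-1}$ back to the unused pool and draw on a reserve of size $\Theta(\mu n)$ set aside inside $U_{h-1} \setminus U^*_{h-1}$ at the outset of the last step. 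Establishing that such augmenting paths can always be found — that is, that the pseudorandom structure inherited from the random choices at earlier levels is strong enough to guarantee the rewirings lie inside this buffer — is the main technical obstacle of the proof.
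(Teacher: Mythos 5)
Your proposal does not prove the statement in question. The statement is Theorem~\ref{thm:AKS} itself --- the Alon--Kim--Spencer result that a $3$-uniform \emph{simple} hypergraph on $N$ vertices in which every vertex has degree $D\pm K\sqrt{D\ln D}$ contains a matching covering all but $O(ND^{-1/2}\ln^{3/2}D)$ vertices. What you have written is instead a top-down embedding scheme for the main theorem of the paper (embedding a perfect $d$-ary hypertree into a Steiner triple system), and in the middle of it you explicitly invoke Theorem~\ref{thm:AKS} as a black box ("placing us within the hypotheses of Theorem~\ref{thm:AKS}"). Using a theorem to prove an application of it is not a proof of the theorem; as written, the proposal contains no argument at all for the stated matching result.

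For the record, Theorem~\ref{thm:AKS} is not proved in this paper either: it is quoted from Alon, Kim and Spencer (in the form stated by Kostochka and R\"odl), and its proof requires the semi-random ("nibble") method --- iteratively selecting a small random set of edges, removing their vertices, and controlling the degree evolution with sharp concentration inequalities so that near-regularity is preserved at each step, with the final error term $O(ND^{-1/2}\ln^{3/2}D)$ coming from the accumulated deviations. None of these ingredients appears in your write-up. Separately, even read as a sketch of the main theorem, your plan diverges from the paper's: the paper fixes a single random partition $\{C_0,\dots,C_t,R\}$ up front (Lemma~\ref{lemma:partition}) and repairs the $O(\delta)$-fraction of defects by greedy extension into the reservoir $R$ (Claim~\ref{claim:reservoir}), which sidesteps the "augmenting-path rewiring" difficulty you identify at the last level; that difficulty is an artifact of re-randomizing at each level rather than an obstacle inherent to the problem.
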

	
	Here the constant in $O()$-notation is depending on $K$ only an is independent of $N$ and $D$. 
	
	In Lemma~\ref{lemma:partition} we consider a random partition of the vertex set of Steiner triples system $S$ and heavily use the following version of Chernoff's bound (this is Corollary 2.3 of Janson, \L uczak, Rucinski~\cite{JLR}).
	\begin{theorem}\label{thm:Chernoff}
		Let $X\sim \text{Bi}(n,p)$ be a binomial random variable with the expectation $\mu$, then for $t\leq \frac{3}{2}\mu$
		$$\mathbb{P}(|X-\mu|>t)\leq 2e^{-t^{2}/(3\mu)}.$$
	In particular for $K\leq \frac{3}{2}\sqrt{\frac{\mu}{\ln\mu}}$ and $t=K\sqrt{\mu \ln \mu}$
	\begin{equation}\label{eq:chernoff1}
		\mathbb{P}(|X-\mu|>K\sqrt{\mu\ln \mu})\leq 2(\mu)^{-K^2/3}.
	\end{equation}
	If $\varepsilon>0$ is fixed and $\mu>\mu(\varepsilon)$, then
	\begin{equation}\label{eq:chernoff2}
		\mathbb{P}(X=(1\pm\varepsilon)\mu)=1-e^{-\Omega(\mu)}.
	\end{equation}
	\end{theorem}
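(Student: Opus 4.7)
The plan is to prove the master tail bound $\mathbb{P}(|X-\mu|>t)\leq 2e^{-t^{2}/(3\mu)}$ via the classical Cram\'er--Chernoff exponential-moment method, and then obtain both \eqref{eq:chernoff1} and \eqref{eq:chernoff2} as direct substitutions into this master bound.

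First I would write $X=\sum_{i=1}^{n}X_{i}$ with $X_{i}$ i.i.d.\ Bernoulli$(p)$, so that for every $\lambda\in\mathbb{R}$ one has $\mathbb{E}[e^{\lambda X_{i}}]=1-p+pe^{\lambda}\leq \exp\bigl(p(e^{\lambda}-1)\bigr)$ by the inequality $1+x\leq e^{x}$. Independence gives $\mathbb{E}[e^{\lambda X}]\leq \exp\bigl(\mu(e^{\lambda}-1)\bigr)$ with $\mu=np$. Applying Markov's inequality to $e^{\lambda X}$ for $\lambda>0$ yields
$$\mathbb{P}(X\geq \mu+t)\leq \exp\bigl(\mu(e^{\lambda}-1)-\lambda(\mu+t)\bigr),$$
and optimizing at $\lambda^{*}=\ln(1+t/\mu)$ produces the sharp form $\mathbb{P}(X\geq \mu+t)\leq e^{-\mu\varphi(t/\mu)}$, where $\varphi(x)=(1+x)\ln(1+x)-x$. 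The analogous calculation with $\lambda<0$ gives $\mathbb{P}(X\leq \mu-t)\leq e^{-\mu\psi(t/\mu)}$ for a rate function $\psi$ which is pointwise at least $\varphi$.

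The next step is to replace $\varphi$ by its Bernstein-type lower bound $\varphi(x)\geq x^{2}/(2+2x/3)$ for $x\geq 0$, an elementary calculus exercise obtained by comparing the Taylor series of $(1+x)\ln(1+x)$ with that of $x+x^{2}/(2+2x/3)$. Plugging in $x=t/\mu$ gives $\mu\varphi(t/\mu)\geq t^{2}/(2\mu+2t/3)$, and the hypothesis $t\leq \tfrac{3}{2}\mu$ makes $2\mu+2t/3\leq 3\mu$, so $\mu\varphi(t/\mu)\geq t^{2}/(3\mu)$. Summing the upper- and lower-tail estimates then gives the master bound with the factor of $2$ in front.

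The two corollaries are now routine. For \eqref{eq:chernoff1}, the substitution $t=K\sqrt{\mu\ln\mu}$ is legal exactly when $K\leq \tfrac{3}{2}\sqrt{\mu/\ln\mu}$, and then $t^{2}/(3\mu)=K^{2}\ln\mu/3$, so $e^{-t^{2}/(3\mu)}=\mu^{-K^{2}/3}$. For \eqref{eq:chernoff2}, once $\mu$ is large enough to force $\varepsilon\leq 3/2$ (automatic), taking $t=\varepsilon\mu$ gives $t^{2}/(3\mu)=\varepsilon^{2}\mu/3$, and the complementary probability is $1-e^{-\Omega(\mu)}$ with the implicit constant depending only on $\varepsilon$. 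The only non-routine ingredient in the whole argument is the Bernstein interpolation $\varphi(x)\geq x^{2}/(2+2x/3)$, which bridges Gaussian behavior for $t\ll\mu$ and Poisson behavior as $t$ approaches $\tfrac{3}{2}\mu$; everything else is bookkeeping. Alternatively, one may simply invoke the result as Corollary 2.3 of \cite{JLR}, as the paper does.
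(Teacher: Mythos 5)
Your proof is correct. There is nothing in the paper to compare it against: the authors do not prove this theorem but import it verbatim as Corollary 2.3 of Janson, \L{}uczak and Ruci\'nski \cite{JLR}. Your derivation --- exponential moments, optimizing at $\lambda^{*}=\ln(1+t/\mu)$ to get the rate function $\varphi(x)=(1+x)\ln(1+x)-x$, the Bernstein interpolation $\varphi(x)\geq x^{2}/(2+2x/3)$, and the observation that $t\leq\tfrac{3}{2}\mu$ gives $2\mu+2t/3\leq 3\mu$ --- is the standard argument and is essentially the proof given in \cite{JLR} itself, with the lower tail handled by the stronger bound $e^{-t^{2}/(2\mu)}$. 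One cosmetic point: for \eqref{eq:chernoff2} the condition $\varepsilon\leq 3/2$ is not ``forced by $\mu$ large,'' since $\varepsilon$ is fixed in advance; for $\varepsilon>3/2$ one instead uses that the event $\{|X-\mu|>\varepsilon\mu\}$ is contained in $\{|X-\mu|>\tfrac{3}{2}\mu\}$ and applies the master bound at $t=\tfrac{3}{2}\mu$, which still yields $e^{-\Omega(\mu)}$.
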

	
	%%%%%%%%%%%%%%%%%%%%%%%%%%%%%%%%%%%%%%%%%%%%%%%%%%%%%%%%%%%%
	%%%%%%%%%%%%%%%%%%%%%%%  Section Proof %%%%%%%%%%%%%%%%%%%%
	%%%%%%%%%%%%%%%%%%%%%%%%%%%%%%%%%%%%%%%%%%%%%%%%%%%%%%%%%%%%
	\section{Proof of Theorem~\ref{thm:main}}
	\subsection{Proof Idea}
	Assume that $S$ is an STS on at least $(1+\mu)n$ vertices. We will choose small constants $\varepsilon \ll \mu$. 
	
	Let $T$ be a perfect $d$-ary tree on at most $n$ vertices with levels $V_i$, $i\in[0,h]$ and let $i_0=\max\{i, |V_i|\leq \varepsilon n \}$. Let $T_0$ be a subhypertree of $T$ induced on $\bigcup_{i=0}^{i_0}V_i$. To simplify our notation we set $t=h-i_0$ and for all $i\in[0,t]$ we set $L_i=V_{i_0+i}$. Our goal is to find $L\subset V(S)$ with $|L|=|V(T)|$ such that $S[L]$, the subhypergraph induced by $L$,  contains a spanning copy of $T$. In particular we would find such $L$, level by level, first embedding $T_0$, and then $L_1, \ldots, L_t$. 
	
	To start we consider a partition $\mathcal{P}=\{C_0,\ldots, C_t, R\}$ of $V(S)$ with ``random-like'' properties (see Lemma~\ref{lemma:partition} for the description of $\mathcal{P}$) in the following way:
	\begin{itemize}
		\item The first few levels of $T$, constituting $T_0$ with the subset of leafs $L_0$, will be embedded greedily into $S[C_0]$.
		\item Then Lemma~\ref{lemma:FR} and Lemma~\ref{lemma:AKS} will be used to find star forests in $S[L_0,C_1]$ and $S[C_{i-1},C_{i}]$ for $i\in[2,t]$. The union of these star forests will establish embedding of almost all vertices of $T$ (see Claim~\ref{claim:sizeT1}).
		\item Finally, the reservoir vertices will be used to complete the embedding (see Claim~\ref{claim:reservoir}).
	\end{itemize} 
	
	\subsection{Auxiliary Lemmas}
	We start our proof with the following Lemma that allows us later to verify that $S[L_0, C_1]$ contains almost perfect packing of size at most $d$ hyperstars centered at vertices of $L_0$
	
	\begin{lemma}\label{lemma:FR}
		For any positive real $\delta$, $k>1$ there are $\varepsilon>0$ and $D_0$ such that the following holds for all $D\geq D_0$ and all positive integers $d$. Let $G=(V,E)$ be a 3-uniform simple hypergraph on $N$ vertices such that $V=X\sqcup Y$ and 
		\begin{itemize}
			\item[(i)] for all $e\in E$, $|e\cap X|=1$ and $|e\cap Y|=2$.
			\item[(ii)] for all vertices $v\in X$ we have
			$$d(v)=dD(1\pm\varepsilon),$$
			and for all but at most $\varepsilon N$ vertices $v\in Y$ we have 
			$$d(v)=D(1\pm\varepsilon).$$
			\item[(iii)] $d(v)\leq kD$ for all $v\in Y$. 
		\end{itemize}
		Then $G$ contains a packing of hyperstars of size at most $d$ centered at vertices of $X$ that covers all but at most $\delta N$ vertices.	
	\end{lemma}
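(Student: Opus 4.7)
My plan is to reduce to the almost-perfect matching result, Theorem~\ref{thm:FR}, by passing to an auxiliary $3$-uniform simple hypergraph $H$ in which every $v\in X$ has been split into $d$ copies $v^{(1)},\dots,v^{(d)}$ and the edges of $G$ through $v$ have been distributed evenly among these copies. A near-perfect matching in $H$ will correspond exactly to a packing of hyperstars in $G$ of size at most $d$ centered in $X$.

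To construct $H$, for each $v\in X$ enumerate the edges of $G$ incident to $v$ as $e_1(v),e_2(v),\dots$ and assign $e_i(v)$ to the copy $v^{(j_i)}$ with $j_i \equiv i \pmod{d}$. Setting $V(H) = Y \sqcup \{v^{(j)} : v\in X,\ j\in [d]\}$, replace each $\{v,y,y'\}\in E(G)$ by $\{v^{(j_i)},y,y'\}\in E(H)$. The main point to verify---and the step I expect to be the most delicate---is that $H$ remains simple. Two edges of $H$ sharing two vertices of $Y$ would give two edges of $G$ containing the same pair in $Y$, while two edges of $H$ sharing some $v^{(j)}$ and some $y\in Y$ would give two edges of $G$ containing the pair $\{v,y\}$; both situations contradict the linearity of $G$.

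I would then verify the hypotheses of Theorem~\ref{thm:FR} for $H$ with the same parameter $D$. By the round-robin assignment, every copy $v^{(j)}$ satisfies
$$d_H(v^{(j)}) \in \bigl\{\lfloor d_G(v)/d \rfloor,\, \lceil d_G(v)/d \rceil\bigr\} \subseteq D(1\pm 2\varepsilon),$$
provided $D$ is large in terms of $\varepsilon$. Vertices in $Y$ keep their $G$-degrees, so all but at most $\varepsilon N$ of them lie in $D(1\pm\varepsilon)$ and all are at most $kD$. Since $|V(H)|\geq N$, the fraction of ``bad'' vertices in $H$ is still at most $\varepsilon$. Choosing $\varepsilon$ small relative to the $\varepsilon_{\mathrm{FR}}$ supplied by Theorem~\ref{thm:FR} (applied with parameters $\delta'$ to be fixed and $k'=\max(k,2)$), and $D_0$ correspondingly large, Theorem~\ref{thm:FR} yields a matching $M$ in $H$ leaving at most $\delta'|V(H)|$ vertices unmatched.

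Finally I translate $M$ back: replace each $\{v^{(j)},y,y'\}\in M$ by $\{v,y,y'\}\in E(G)$. Distinct edges of $M$ through copies of the same $v$ are disjoint in $Y$, so they form a star at $v$, and since $M$ uses each $v^{(j)}$ at most once, each star has at most $d$ edges. The uncovered vertices of $G$ are the unmatched $y\in Y$ (at most $\delta'|V(H)|$) together with those $v\in X$ all of whose copies are unmatched; the latter number is at most $\delta'|V(H)|/d$ because each such $v$ contributes $d$ unmatched copies. From the edge-counting identity
$$ |X|dD(1-\varepsilon) \leq e(G) = \tfrac{1}{2}\sum_{y\in Y} d_G(y) \leq \tfrac{k}{2}D|Y|,$$
one has $d|X| \leq k|Y|$ and hence $|V(H)| = |Y|+d|X| \leq (k+1)N$. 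Choosing $\delta' = \delta/(2(k+1))$ therefore bounds the total number of uncovered vertices of $G$ by $2\delta'|V(H)| \leq \delta N$, completing the reduction.
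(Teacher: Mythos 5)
Your proposal is correct and follows essentially the same route as the paper: split each $v\in X$ into $d$ copies with the incident edges distributed evenly, apply Theorem~\ref{thm:FR} to the resulting simple hypergraph $H$, and translate the near-perfect matching back into a packing of hyperstars of size at most $d$. The only differences are bookkeeping (the paper bounds $|V(H)|\leq \tfrac32 N$ via $|X|\leq N/2d$ and takes $\delta'=2\delta/3$, whereas you bound $|V(H)|\leq (k+1)N$ and shrink $\delta'$ accordingly), and you additionally spell out the simplicity of $H$ and the exact accounting of uncovered vertices, which the paper treats more briefly.
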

	\begin{proof}
		For given $\delta, k$ set $\delta_{2.1}=2\delta/3$ and $k_{2.1}=k$. With these parameters as an input, Theorem~\ref{thm:FR} yields $\varepsilon_{2.1}$ and $D_0$. Set $\varepsilon=\varepsilon_{2.1}/2$ and note that if Theorem~\ref{thm:FR} holds for some $\varepsilon_{2.1}$ and $D_0$ then it also holds for smaller values of $\varepsilon_{2.1}$ and larger values of $D_0$. Therefore we may assume that $\varepsilon$ is sufficiently small with respect to $\delta$ and $k$.

		%Let $\delta$ and $D_0$ be the numbers guaranteed by Theorem~\ref{thm:FR} with $\varepsilon_{\ref{thm:FR}}=\varepsilon/(5d)$ and $k_{\ref{thm:FR}}=k$ as input. Note that if for some $\varepsilon,k$ the statement of Theorem~\ref{thm:FR} holds for some $\delta$ and $D_0$, then the statement also holds for all smaller $\delta$ and larger $D_0$. Therefore we may assume that $\delta<\varepsilon_{\ref{thm:FR}}$. 
		
		Let $G$ that satisfies conditions (i)--(iii) be given. We start with constructing an auxiliary hypergraph $H$ that is obtained from $G$ by repeating the following splitting procedure for each vertex $v\in X$: split hyperedges incident to $v$ into $d$ disjoint groups, each of size $D(1\pm 2\varepsilon)=D(1\pm \varepsilon_{2.1})$, and then replace $v$ with new vertices $v_1, \ldots, v_d$ and each hyperedge $\{v,u,w\}$ that belongs to group $j$ with a hyperedge $\{v_j, u,w\}$. 
		
		First, we show that $|V(H)|\leq \frac{3}{2}N$. Note that due to conditions (i)-(iii) the number of hyperedges 
		$$|E(G)|\sim |X|dD \sim \frac{1}{2}|Y|D.$$
		Provided $\varepsilon$ is small enough and $N$ is large enough compared to $\delta$ we can guarantee $|X|\leq \frac{N}{2d}$. Finally, $|V(H)|\leq d|X|+|Y|$ by construction of $H$, so 
		$$N< |V(H)|\leq (d-1)|X|+|X|+|Y|\leq (d-1)|X|+N\leq \frac{3}{2}N.$$
		Hypergraph $H$ satisfies assumptions of the Theorem~\ref{thm:FR} with parameters $\delta_{\ref{thm:FR}}=\frac{2}{3}\delta$, $k_{2.1}=k$, $\varepsilon_{2.1}=2\varepsilon$, $D_{2.1}=D$ and $N_{2.1}=|V(H)|$. Indeed, all of the vertices in $H$ still have degrees at most $k_{2.1}D_{2.1}$, and for all but at most $\varepsilon_{2.1} N_{2.1}$ vertices we have $d_{H}(v)=D_{2.1}(1\pm \varepsilon_{2.1})$. Therefore, there is a matching $M$ in $H$ that omits at most $\delta_{\ref{thm:FR}}N_{2.1}\leq \delta N$ vertices. 
		
		Now, matching $M$ in $H$ corresponds to a collection of hyperstars $S_1, \ldots, S_k$ in $G$ with centers at vertices of $X$ and size of each $S_i$ is at most $d$. Indeed, recall that during the construction of $H$ some vertices $v\in X$ were replaced by $d$ vertices $v_1,\ldots, v_d$, hyperedges incident to $v$ were split into $d$ almost equal in size disjoint groups, and then each hyperedge $\{v,u,w\}$ in $j$-th group was replaced with $\{v_j,u,w\}$. Consequently, a matching in $H$ that covers some vertices $v_i$ gives a rise to a hyperstar centered at $v$ of size at most $d$ in $G$. 
		
		Moreover since $M$ in $H$ omits at most $\delta N$ vertices, the union of hyperstars $S_1,\ldots, S_k$ also omits at most $\delta N$ vertices.  
	\end{proof}
	
	The following Lemma allows us later to verify that $S[C_{i}, C_{i+1}]$ contains almost perfect packing of size at most $d$ hyperstars centered at vertices of $C_i$ for all $i\in[t-1]$.

	\begin{lemma}\label{lemma:AKS}
		For any positive real $K$ there is $D_0$ such that the following holds for all $D\geq D_0$, $\Delta=K\sqrt{D\ln D}$ and any positive integer $d$. Let $G=(V,E)$ be a 3-uniform simple hypergraph on $N$ vertices such that $V=X\sqcup Y$ and 
		\begin{itemize}
			\item[(i)] for all $e\in E$, $|e\cap X|=1$ and $|e\cap Y|=2$.
			\item[(ii)] $d(v)=d(D\pm\Delta)$ for all $v\in X$ and $d(v)=D\pm\Delta$  for all $v\in Y$.
		\end{itemize}
		Then $G$ contains a packing of hyperstars of size at most $d$ centered at vertices of $X$ that covers all but at most $O(ND^{-1/2}\ln^{3/2}D)$ vertices.	
	\end{lemma}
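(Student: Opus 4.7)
The plan is to mimic the proof of Lemma~\ref{lemma:FR}, with Theorem~\ref{thm:AKS} replacing Theorem~\ref{thm:FR}. I would construct an auxiliary 3-uniform simple hypergraph $H$ by a vertex-splitting step: for every $v \in X$, partition the $d(v) = d(D\pm \Delta)$ edges incident to $v$ into $d$ groups whose sizes differ by at most one, and replace $v$ by $d$ new vertices $v_1,\ldots,v_d$, turning each edge $\{v,u,w\}$ in the $j$-th group into a new edge $\{v_j,u,w\}$. By construction each $v_j$ has degree $\lfloor d(v)/d\rfloor$ or $\lceil d(v)/d\rceil$, so $d_H(v_j) = D \pm (\Delta+1) \subseteq D \pm 2K\sqrt{D\ln D}$ for $D$ large. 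Vertices in $Y$ retain their original degrees $D\pm\Delta$, so every vertex of $H$ has degree $D \pm 2K\sqrt{D\ln D}$.

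Next I would bound $|V(H)|$ in terms of $N$. Double-counting $|E(G)|$ via the two parts gives $|X|\cdot d(D\pm\Delta) = |Y|(D\pm\Delta)/2$, hence $|Y| = 2d|X|(1\pm o(1))$ and consequently $|V(H)| = d|X| + |Y| \le 3d|X|(1+o(1)) \le \tfrac{3}{2}N(1+o(1))$. Applying Theorem~\ref{thm:AKS} to $H$ with constant $2K$ (for $D$ large enough so that $D_0$ is absorbed) yields a matching $M$ in $H$ covering all but $O(|V(H)|\, D^{-1/2}\ln^{3/2}D) = O(N D^{-1/2}\ln^{3/2}D)$ vertices.

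Finally I would translate $M$ back to $G$. For each $v \in X$, the edges of $M$ that contain some copy $v_j$ (together with their two other endpoints in $Y$) form a hyperstar centered at $v$ of size at most $d$ in $G$; since $M$ is a matching, these stars are pairwise vertex-disjoint and covered by no edge of $G$ twice. A vertex of $G$ is missed by the star packing precisely when it (or all of its copies, in the case of $v\in X$) is unmatched in $M$, so the number of vertices of $G$ not covered by the star packing is at most the number of unmatched vertices of $H$, namely $O(ND^{-1/2}\ln^{3/2}D)$, as required. The only step needing any care is the degree bookkeeping after splitting, which is why we lose a factor of $2$ in the constant $K$; this is purely routine and there is no genuine obstacle in the argument.
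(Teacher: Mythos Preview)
Your proposal is correct and follows essentially the same approach as the paper: split each $v\in X$ into $d$ copies so that the resulting hypergraph $H$ has all degrees $D\pm 2K\sqrt{D\ln D}$, observe via double counting that $|V(H)|=\Theta(N)$, apply Theorem~\ref{thm:AKS} with constant $2K$, and translate the resulting near-perfect matching back to a packing of hyperstars of size at most $d$ centered at $X$. The paper's proof is slightly terser about the splitting and the $|V(H)|$ estimate, but there is no substantive difference.
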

	Here the constant in $O()$-notation depends on $K$ only.
	\begin{proof}
		Proof is almost identical to the proof of Lemma~\ref{lemma:FR}. For a given $K$ let $D_0$ be the number guaranteed by Theorem~\ref{thm:AKS} with $2K$ as input.  
		
		Let $G$ that satisfies conditions (i),(ii) be given. We start with constructing an auxiliary hypergraph $H$ that is obtained from $G$ by splitting every vertex $v\in X$ into $d$ new vertices $v_1, \ldots, v_d$ that have degrees $D\pm 2\Delta$.
		
		First, we will show that $|V(H)|=\Theta(N)$. Note that due to conditions (i) and (ii), we have 
		
		$$\frac{|Y|(D\pm\Delta)}{2}= |E(G)|= |X|d\left(D\pm\frac{\Delta}{2}\right).$$
		
		In particular, $$|Y|= |X|d\left(\frac{2D\pm\Delta}{D\pm\Delta}\right).$$
		As $|X|+|Y|=N$ we have that $|Y|=\Theta(N)$ and hence $d|X|=\Theta(N)$. Then $|V(H)|= d|X|+|Y|$ by construction of $H$, so $|V(H)|=\Theta(N)$ as well.
		
		Hypergraph $H$ satisfies assumptions of the Theorem~\ref{thm:AKS} with parameters $2K$ and $D\geq D_0$. Therefore, there is a matching $M$ in $H$ that omits at most $O(ND^{-1/2}\ln^{3/2}D)$ vertices. 
		
		Now, matching $M$ in $H$ corresponds to a collection of hyperstars $S_1, \ldots, S_k$ in $G$ with centers at vertices of $X$. Each hyperstar $S_i$ contains at most $d$ hyperedges and hyperstars $S_1, \ldots, S_{k}$ cover all but at most $O(ND^{-1/2}\ln^{3/2}D)$ vertices of $G$, which finishes the proof.
	\end{proof}
	
	\subsection{Formal Proof}
	We start by defining constants, proving some useful inequalities and proving Lemma~\ref{lemma:partition}. 
	
	Let $S$ be a Steiner triple system on $m\geq(1+\mu)n$ vertices and let $T$ be the largest perfect $d$-ary hypertree with at most $n$ vertices. Our goal is to show that $T\subset S$. 
	
	We make few trivial observations. First, if $d>\sqrt{n}$ and $T$ is perfect $d$-ary hypertree with $|V(T)|\leq n$, then $T$ is just a hyperstar which $S$ clearly contains. Second, if $m> 2n$, then $T$ can be found in $S$ greedily. Finally, if Theorem~\ref{thm:main} holds for some value of $\mu$, then Theorem~\ref{thm:main} holds for larger values of $\mu$. Hence we may assume without loss of generality that $d\leq \sqrt{n}$, $m\leq 2n$ and $\mu\leq \frac{1}{4}$.
	
	{\bf {Constants.}} 		
	We will choose new constant $\varepsilon<\delta <\rho <\mu$ independent of $m,n$:
	\begin{equation}\label{eq:rho+delta}
		\rho=\left(\frac{3\mu-\mu^2}{8(1+\mu)}\right)^2 , \qquad \delta=\frac{(1+\mu)\rho}{20}.
	\end{equation}
	Let $\varepsilon_{3.1}$ be a constant guaranteed by Lemma~\ref{lemma:FR} with $\delta$ and $k=2$ as an input. We choose $\varepsilon$ to be small enough, in particular we want 
	\begin{equation}\label{ineq:epsilon}
		\varepsilon<\min\{\delta^2, \mu^2/16, (\varepsilon_{3.1})^{10}, 1/10^{100}\}.
	\end{equation}

	{\bf Properties of $T$.} Here we define the levels of $T$ and prove some useful inequalities. Recall that $V_i$, $i\in[0,h]$ denoted the levels of $T$. For $i_0=\max\{i, |V_i|\leq \varepsilon n \}$ let $T_0$ be a subhypertree of $T$ induced on $\bigcup_{i=0}^{i_0}V_i$. To simplify our notation we also set $t=h-i_0$ and for all $i\in[0,t]$, $L_i=V_{i_0+i}$ and $\ell_i=|L_i|$. Then we have for $i\in[t]$
	\begin{equation}\label{ineq:elli}
		\ell_i=(2d)^{i}\ell_0, \qquad \varepsilon n \geq \ell_0 > \frac{\varepsilon}{2d}n.
	\end{equation}
	Since $n\geq \ell_t$, (\ref{ineq:elli}) implies $n\geq (2d)^{t-1}\varepsilon n$ and consequently 
	\begin{equation}\label{ineq:t}
		t\leq 1+\frac{\log\frac{1}{\varepsilon}}{\log (2d)}\leq 1+\log\frac{1}{\varepsilon}.
	\end{equation}
	
	Finally, $T_0=\bigcup_{i=0}^{i_0}V_i$, where $|V_{i_0}|=(2d)^{i_0}=\ell_0$, so 
	\begin{equation}\label{ineq:T_0}
		|V(T_0)| =\frac{(2d)^{i_0+1}-1}{2d-1} \leq \frac{(2d)\ell_0}{2d-1}\stackrel{(\ref{ineq:elli})}{\leq} 2\varepsilon n.
	\end{equation}

	{\bf Partition Lemma.}
	
	For a given STS $S$ with $m$ vertices our goal will be to find a partition $\mathcal{P}=\{C_1,\ldots,C_t,R\}$ of $V(S)$ so that $S[C_0]$ contains a copy of $T_0$ (and $L_0$), sets $C_1, \ldots, C_t$ are the ``candidates'' for levels $L_1, \ldots, L_t$ of $T$ and $R$ is a reservoir. Such a partition will be guaranteed by Lemma~\ref{lemma:partition}.
	
	In the proof we will consider a random partition $\mathcal{P}$, where each vertex $v\in V(S)$ ends up in $C_i$ with probability $p_i$ and in $R$ with probability $\gamma$ independently of other vertices. 
	
	To that end set 
	\begin{equation}\label{eq:p_0}
		p_0=4\sqrt{\varepsilon},
	\end{equation}
	then by~(\ref{ineq:T_0}) and (\ref{ineq:epsilon}) 
	\begin{equation}\label{ineq:p_0}
	\frac{p_0^2}{4}\left(m-1\right)\geq |V(T_0)|, \qquad p_0\leq \frac{\mu}{4}.
	\end{equation} 
	Now, for all $i\in[t]$ define 
	\begin{equation}\label{eq:p_i}
	p_i=\frac{\ell_i}{m}\stackrel{(\ref{ineq:elli})}{\geq} \frac{\varepsilon n}{m}\geq \frac{\varepsilon}{2} .
	\end{equation}
	
	Finally let $\gamma=1-\sum_{i=0}^{t}p_i$. Then 
	$$\gamma\geq 1-\frac{\mu}{4}-\sum_{i=1}^{t}p_i=1-\frac{\mu}{4}-\frac{\sum_{i=1}^{t}\ell_i}{m}\geq 1-\frac{\mu}{4}-\frac{|V(T)|}{m}, $$
	and so 
	\begin{equation}\label{ineq:gamma}
		\gamma\geq 1-\frac{\mu}{4}-\frac{n}{m}\geq 1-\frac{\mu}{4}-\frac{1}{1+\mu}= \frac{3\mu-\mu^2}{4(1+\mu)}\stackrel{(\ref{eq:rho+delta})}{=} 2\sqrt{\rho}.
	\end{equation}
	Hence $\gamma \in (0,1)$.
	
%	Here we notice that $p_0,\ldots, p_t, \gamma$ are not constants and can depend on $n,m$, however (\ref{eq:p_i}) and (\ref{ineq:gamma}) provides positive lower bounds on $p_i$'s and $\gamma$ in terms of $\mu$ and $\varepsilon$ only.
	\begin{lemma}\label{lemma:partition}
	 Let $\varepsilon$, $\ell_0, \ldots, \ell_t$, $p_0, \ldots, p_t$, $\gamma$ and $\rho$ be defined as above. Then for some $m_0=m_0(\varepsilon)$ and $K=8$ the following is true for any $m\geq m_0$. If $S$ is a STS on $m$ vertices, then there is a partition $\mathcal{P}=C_0\sqcup C_1\ldots \sqcup C_t \sqcup R$ of $V(S)$ with the following properties:
	 \begin{itemize}
	 	\item[(a)] $|C_i|=\ell_i\pm K\sqrt{\ell_i\ln \ell_i}$ for all $i\in[t]$.
	 	\item[(b)] for all $i\in[t]$ and all $v\in C_{i-1}$
	 	$$d_{S[C_{i-1},C_i]}(v)=d\left(p_i\ell_{i-1}\pm K\sqrt{p_i\ell_{i-1}\ln p_i\ell_{i-1}}\right).$$
	 	\item[(c)]for all $i\in[2,t]$ and all $v\in C_{i}$
	 	$$d_{S[C_{i-1},C_i]}(v)=p_{i}\ell_{i-1}\pm K\sqrt{p_i\ell_{i-1}\ln p_i\ell_{i-1}}.$$
	 	\item[(d)] for all $v\in V(S)$, $d_{S[v\cup R]}(v)\geq \rho m.$
	 	\item[(e)] $|C_0|=p_0m\pm K\sqrt{p_0m\ln p_0m}$ and $S[C_0]$ contains a copy of a hypertree $T_0$ with $L_0$ as its last level. Moreover for all but at most $\varepsilon^{0.1}|C_1|$ vertices $v\in C_1$
	 	$$d_{S[L_0,C_1]}(v)=(1\pm\varepsilon^{0.1})p_1\ell_0,$$
	 	and for all vertices $v\in C_1$
	 	$$d_{S[L_0,C_1]}(v)\leq2 p_1\ell_0.$$ 
		 \end{itemize}	
	\end{lemma}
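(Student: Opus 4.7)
The plan is to generate $\mathcal{P}$ as a random partition and verify each property by Chernoff-type concentration. To get enough control for property~(e) I would refine by splitting $C_0 = B_0 \sqcup L_0^*$: each $v \in V(S)$ is placed independently into $B_0$ with probability $p_0 - \ell_0/m$, into $L_0^*$ with probability $(1+\varepsilon^{0.2})\ell_0/m$ (slightly oversampled so that $|L_0^*| \geq \ell_0$ w.h.p.), into each $C_i$ ($i \in [t]$) with probability $p_i$, and into $R$ with the residual probability (differing from $\gamma$ by $O(\varepsilon^{0.2}\ell_0/m)$, absorbed into slack throughout).

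Properties (a)--(d) and the cardinality/degree portions of (e) all reduce to binomial concentration. The key observation is that for any fixed $v \in V(S)$, the $(m-1)/2$ other pairs appearing in triples with $v$ in $S$ are \emph{vertex-disjoint} by the Steiner property, so any indicator defined on each such pair yields independent indicators across pairs, and each listed degree random variable is binomial with mean matching the target up to $(1+o(1))$ factors. For example, $\mathbb{E}[d_{S[C_{i-1},C_i]}(v)] = (m-1)p_i^2/2 = (1+o(1))\,dp_i\ell_{i-1}$ for $v \in C_{i-1}$ and $(1+o(1))\,p_i\ell_{i-1}$ for $v \in C_i$, while $\mathbb{E}[d_{S[\{v\}\cup R]}(v)] = (m-1)\gamma^2/2 \geq 2\rho m$ by~(\ref{ineq:gamma}). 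Applying~(\ref{eq:chernoff1}) with $K=8$ to each and union-bounding the $O(m\log(1/\varepsilon))$ events (using~(\ref{ineq:t})) yields (a)--(d), together with the concentration of $|C_0|$, $|B_0|$, $|L_0^*|$, and $d_{S[L_0^*,C_1]}(v)$ with probability $1-o(1)$.

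For the embedding part of (e), I would embed $T_0$ greedily into $S[C_0]$, routing internal vertices of $T_0$ into $B_0$ and the leaf level into $L_0^*$. At each step, when extending from an already placed vertex $u$ by a triple $\{u,x,y\}$ with $x,y$ in the target class $Z \in \{B_0, L_0^*\}$, the number of valid candidates is concentrated around its mean $(m-1)|Z|^2/(2m^2)$ less at most $|V(T_0)|-1 \leq 2\varepsilon n$ triples blocked by previously used vertices (each used vertex kills at most one triple at $u$ by the Steiner property). For $Z = B_0$ the mean exceeds $2|V(T_0)|$ by~(\ref{ineq:p_0}); for $Z = L_0^*$ the mean is $\approx \ell_0^2/(2m)$, which dominates $d$ via $\ell_0 \geq \varepsilon n/(2d)$ and $m \leq 2n$ in the generic range. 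Once the embedding is in place, $L_0 \subseteq L_0^*$ of size exactly $\ell_0$ is extracted, and $|L_0^* \setminus L_0| \leq 3\varepsilon^{0.2}\ell_0$ by Chernoff on $|L_0^*|$; since each $u \in L_0^*\setminus L_0$ shares at most one triple of $S$ with any $v \in C_1$, one obtains $|d_{S[L_0^*,C_1]}(v) - d_{S[L_0,C_1]}(v)| \leq 3\varepsilon^{0.2}\ell_0$, transferring the $\varepsilon^{0.2}$-concentration of $d_{S[L_0^*,C_1]}$ to the $\varepsilon^{0.1}$-concentration of $d_{S[L_0,C_1]}$ demanded in (e).

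The main obstacle is the leaf-greedy step: verifying that every parent $u$ at level $i_0 - 1$ retains at least $d$ disjoint triples into the unused portion of $L_0^*$ throughout. This reduces to the density inequality $\ell_0^2/(2m) \gtrsim d$, equivalently $d^3 \lesssim \varepsilon^2 n^2/m$ after substituting $\ell_0 \geq \varepsilon n/(2d)$, which holds comfortably whenever $d = O(n^{1/3})$ or $i_0 \geq 2$. The remaining sub-regime, where $d$ is close to $\sqrt n$, forces $T_0$ to be a shallow tree with $i_0 \in \{0,1\}$, so one handles it separately: either $t=0$ (making the degree conditions in (e) vacuous) or $i_0 = 1$, in which case the single root-level step can be executed by enlarging $|L_0^*|$ to make the root's candidate triple count exceed $d$, with $|L_0^*\setminus L_0|$ still controlled directly by a Chernoff bound on the number of root-triples landing in $L_0^*$.
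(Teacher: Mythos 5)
Your treatment of properties (a)--(d) is sound and matches the paper: for a fixed $v$ the Steiner property partitions $V(S)\setminus\{v\}$ into $(m-1)/2$ vertex-disjoint pairs, so each degree is a binomial, and Chernoff plus a union bound over the $O(m\log(1/\varepsilon))$ events finishes. The problem is property (e), where your refinement $C_0=B_0\sqcup L_0^*$ with $|L_0^*|\approx(1+\varepsilon^{0.2})\ell_0$ cannot work. A parent $u$ at level $i_0-1$ has roughly $\tfrac{m-1}{2}\bigl(|L_0^*|/m\bigr)^2\approx\ell_0^2/(2m)$ triples with both remaining vertices in $L_0^*$, and since $\ell_0\leq\varepsilon n\leq\varepsilon m$ this is at most about $\tfrac{\varepsilon}{2}\ell_0\ll\ell_0$. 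Meanwhile up to $\ell_0$ vertices of $L_0^*$ are consumed by earlier leaves, each of which can kill one of $u$'s triples (the unique triple of $S$ through $u$ and that vertex). Your ``density inequality $\ell_0^2/(2m)\gtrsim d$'' drops this blocking term, which dominates everything: the per-parent supply $\ell_0^2/(2m)$ is smaller than the blocking budget $\ell_0$ by a factor $\varepsilon$, for every value of $d$, so the greedy leaf step stalls long before the last level is placed. (Even the aggregate supply, $\approx\ell_0^3/(4dm)$ triples into $L_0^*$ over all $\ell_0/(2d)$ parents, falls short of the $\ell_0/2$ disjoint triples required unless $\ell_0\gtrsim\sqrt{dm}$.) Enlarging $L_0^*$ to repair this forces $|L_0^*|\gtrsim\sqrt{m\ell_0}\gg\ell_0$, which destroys your transfer step. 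And even with the tight $L_0^*$, that transfer step is already quantitatively broken: the worst-case bound $|d_{S[L_0^*,C_1]}(v)-d_{S[L_0,C_1]}(v)|\leq|L_0^*\setminus L_0|\approx 3\varepsilon^{0.2}\ell_0$ must be compared with the permitted deviation $\varepsilon^{0.1}p_1\ell_0$, and since $p_1$ can be as small as $\varepsilon/2$ (by (\ref{eq:p_i}), and this is the generic case $\ell_0\approx\varepsilon n/(2d)$), you would need $\varepsilon^{0.2}\lesssim\varepsilon^{1.1}$, which is false. One could replace the worst-case bound by a conditional Chernoff bound on the difference (its mean is only $\approx p_1|L_0^*\setminus L_0|$), but that does not rescue the leaf-greedy step.

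The paper sidesteps both issues by never trying to confine the leaves to a set of size $\Theta(\ell_0)$. It embeds all of $T_0$, leaves included, greedily into the single set $C_0$ of size $\approx p_0m=4\sqrt{\varepsilon}\,m$, where every vertex has degree $\approx\tfrac{m-1}{2}p_0^2\geq 2|V(T_0)|$ in $S[C_0]$, so the blocking term is harmless. The set $L_0$ is then an adversarially placed $\ell_0$-subset of $C_0$, and (e) is proved by conditioning on $C_0=C$: the conditional law of the remaining vertices is still a product measure with $\mathbb{P}(v\in C_1)=p_1/(1-p_0)$, so $d_{S[L_0,C_1]}(v)\sim\mathrm{Bi}\bigl(d_{S[L_0,\overline{C}]}(v),\,p_1/(1-p_0)\bigr)$, and a purely deterministic double count (Claim~\ref{claim:count}, using that each pair $\{v,x\}$ with $x\in L_0$ lies in exactly one triple, whose third vertex rarely falls in the small set $C$) shows $d_{S[L_0,\overline{C}]}(v)=(1\pm\varepsilon^{0.2})\ell_0$ for all but $|C|/\varepsilon^{0.2}$ vertices. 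This is precisely why (e) only asserts the degree condition for all but $\varepsilon^{0.1}|C_1|$ vertices of $C_1$ --- a weakening your approach was implicitly trying to avoid, and which Lemma~\ref{lemma:FR} is designed to tolerate. You should adopt this conditioning-plus-counting route for (e).
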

	%Notice that condition (f) resembles condition (c) in sense that in (c) for all $i\in[2,t]$ we expect degree of vertices $v\in C_i$ to be $p_i\ell_{i-1}$ and in (f) we expect almost all degrees of vertices $v\in C_1$ to be $p_1\ell_{0}$. %The difference between two cases is only in a type of concentration that we can guarantee. 
	\begin{proof}
		Recall that $\sum_{i=0}^{t}p_i+\gamma=1$. Consider a random partition  $\mathcal{P}=\{C_0,\ldots, C_t, R\}$, where vertices $v\in V(S)$ are chosen into partition classes independently so that $\mathbb{P}[v\in C_i]=p_i$ for $i\in[0,t]$ while $\mathbb{P}[v\in R]=\gamma$. For $j\in\{a,b,c,d,e\}$ let $X^{(j)}$ be the event that the corresponding part of Lemma~\ref{lemma:partition} fails. We will prove that $\mathbb{P}[X^{(j)}]=o(1)$ for each $j\in\{a,b,c,d,e\}$.
		
		{\bf Proof of Property (a).}
		For all $i\in[t]$ let $X_{i}^{(a)}$ be the event that $$\left||C_i|-p_im\right|>K\sqrt{p_im\ln p_im}.$$ Then since $|C_i|\sim \text{Bi}(m,p_i)$ and $\mathbb{E}(|C_i|)=p_im\stackrel{(\ref{eq:p_i})}{=}\ell_i\stackrel{(\ref{ineq:elli})}{=}\Omega(m)$, Theorem~\ref{thm:Chernoff} implies that
		$$\mathbb{P}[X_i^{(a)}]\leq 2(\ell_i)^{-K^2/3}=o(m^{-20}).$$
		Since by (\ref{ineq:t}),  $t\leq1+\log\frac{1}{\varepsilon}\ll m$ we infer that 
		$$\mathbb{P}[X^{(a)}]=\mathbb{P}[\bigcup_{i=1}^{t}X^{(a)}_{i}]\leq \sum_{i=1}^{t}\mathbb{P}[X_{i}^{(a)}]=o(1).$$
		
		{\bf Proof of Property (b).}
		For all $i\in[t]$ and $v\in V(G)$ let $X_{i,v}^{(b)}$ be the event
		$$\left|d_{S[C_{i-1},C_{i}]}(v)-dp_{i}\ell_{i-1}\right|>Kd\sqrt{p_{i}\ell_{i-1}\ln p_{i}\ell_{i-1}},$$ 
		and $Y_{i,v}^{(b)}$ be the event 
		$$\left|d_{S[C_{i-1},C_{i}]}(v)-(m-1)p_{i}^{2}/2\right|>\frac{K}{2}\sqrt{(m-1)p_{i}^{2}/2\ln (m-1)p_{i}^{2}/2}.$$ 
		Then $d_{S[C_{i-1},C_{i}]}(v)\sim \text{Bi}(\frac{m-1}{2},p_i^2)$ and $\mathbb{E}(d_{S[C_{i-1},C_{i}]}(v))=(m-1)p_{i}^2/2\stackrel{(\ref{eq:p_i}),(\ref{ineq:elli})}{=}dp_{i}\ell_{i-1}\pm 1,$
		therefore $X_{i,v}^{b}\subseteq Y_{i,v}^{b}$ for large enough $m$. Moreover, Theorem~\ref{thm:Chernoff} implies that
		$$\mathbb{P}[X_{i,v}^{(b)}]\leq \mathbb{P}[Y_{i,v}^{(b)}]\leq  2((m-1)p_{i}^2/2)^{-K^2/12}\stackrel{(\ref{eq:p_i})}{=}O(m^{-2}).$$
		Finally, union bound yields $$\mathbb{P}[X^{(b)}]\leq \sum_{i\in[t],v\in C_i}\mathbb{P}[X_{i,v}^{(b)}]=o(1).$$
				
		{\bf Proof of Property (c).}
		Proof follows the lines of the proof of part (b). Since for $i\in [2,t]$ and $v\in C_{i}$ we have $d_{S[C_{i-1},C_i]}(v)\sim \text{Bi}(\frac{m-1}{2},2p_ip_{i-1})$ and $\mathbb{E}(d_{S[C_{i-1},C_i]}(v))=(m-1)p_{i}p_{i-1}=p_i\ell_{i-1}\pm 1$. Hence we have $\mathbb{P}[X^{(c)}]=o(1)$
		
		{\bf Proof of Property (d).}
		Proof follows the lines of the proof of part (b), since for all $v\in V(S)$ we have $d_{S[v\cup R]}(v)\sim \text {Bi}(\frac{m-1}{2}, \gamma^2)$ and $\mathbb{E}(d_{S[v\cup R]}(v))=\frac{m-1}{2} \gamma^2\stackrel{(\ref{ineq:gamma})}{\geq}2\rho(m-1)$. Hence we have $\mathbb{P}[X^{(d)}]=o(1)$
		
		{\bf Proof of Property (e)}

		We say that a set $C\subseteq V(S)$ is \emph{typical} if $|C|=p_0m\pm K\sqrt{p_0m\ln p_om}$ and $S[C]$ contains a copy of $T_0$. For a partition $\mathcal{P}=\{C_0, \ldots, C_{t}, R\}$ set $C_i(\mathcal{P})=C_i$ for all $i\in[0,t]$. 
		
		Next we will show that the first statement of (e), namely that $C_0(\mathcal{P})$ is typical, holds asymptotically almost surely.		
		\begin{claim}\label{claim:e1}
			$$\mathbb{P}[C_0(\mathcal{P})\; \text{is typical }]=1-o(1).$$
		\end{claim}
		\begin{proof}
			Let $X$ be the event that $||C_0|-p_0m|\leq K\sqrt{p_0m\ln p_0m},$ 
			and $Y$ be the event that $S[C_0(\mathcal{P})]$ contains a copy of $T_0$. Since $|C_0|\sim \text{Bi}(m,p_0)$ and $\mathbb{E}(|C_0|)=p_0m\stackrel{(\ref{ineq:elli})}{=}\Omega(\sqrt{m})$, Theorem~\ref{thm:Chernoff} implies $\mathbb{P}[X]=1-o(1)$.
			
			For $v\in V(S)$ let $Z_v$ denote the event that $d_{S[C_0]}(v)\geq |V(T_0)|$, then $\bigcap_{v\in V(S)}Z_v\subseteq Y$. Indeed, if every vertex has degree at least $|V(T_0)|$ is $S[C_0]$, then $T_0$ can be found in $S[C_0]$ greedily, adding one hyperedge at a time.
			
			Following the lines of proof of (d), we have $d_{S[C_0]}(v)\sim \text{Bi}(\frac{m-1}{2},p_0^2)$, and $$\mathbb{E}(d_{S[C_0]}(v))=\frac{m-1}{2}p_0^2\stackrel{(\ref{ineq:p_0})}{\geq}2|V(T_0)|,$$
			so by Theorem~\ref{thm:Chernoff} for all $v\in V(S)$ we have $\mathbb{P}[Z_v]\geq 1-o(m^{-20}).$ Finally, 
			$$\mathbb{P}[Y]\geq \mathbb{P}[\bigcap_{v\in V(S)}Z_v]\geq 1-m\cdot o(m^{-20})\geq 1-o(1).$$ Therefore $\mathbb{P}[X]=\mathbb{P}[Y]=1-o(1)$ and hence $\mathbb{P}[X\cap Y]=\mathbb{P}[C_0(\mathcal{P})\; \text{is typical }]=1-o(1)$.
		\end{proof}
		
		Now, for every typical set $C$, we fix one copy of $T_0$ in $S[C]$. 
		
		We first show that there are not many vertices in $\overline{C}=V(S)\setminus C$ that have low degree in $S[L_0, \overline{C}]$. 
		\begin{claim}\label{claim:count}
			For any $\alpha>0$ and any typical set $C$ all but at most $|C|/\alpha$ vertices in $v\in \overline{C}$ satisfy 
			$$d_{S[L_0,\overline{C}]}(v)=(1\pm\alpha)\ell_0.$$
		\end{claim}
		\begin{proof}[Proof of Claim]
			For $v\in \overline{C}$ and $x\in L_0$ there is a unique $w\in V(S)$ such that $\{v,x,w\}\in E(S)$. Consequently, $d_{S[L_0,\overline{C}]}(v)\leq \ell_0$ holds for any $v\in \overline{C}$.
			
			Let $A=\{\{v,x,w\} : x\in L_0, v,w \in \overline{C}\}$. Since for every $x\in L_0$ there are at most $|C|$ edges $\{v,x,w\}$ with $v\in \overline{C}$ and $w\in C$
			\begin{equation}\label{eq:|A|1}
				|A|\geq \ell_0(|\overline{C}|-|C|).
			\end{equation}
			On the other hand let $b$ be the number of ``bad'' vertices $v\in \overline{C}$, i.e., vertices $v$ with $d_{S[L_0,\overline{C}]}(v)<(1-\alpha)\ell_0$. Then we have 
			\begin{equation}\label{eq:|A|2}
				|A|\leq b(1-\alpha)\ell_0+(|\overline{C}|-b)\ell_0.
			\end{equation}
			
			Comparing~(\ref{eq:|A|1}) and ~(\ref{eq:|A|2}) yields that $b\leq|C|/\alpha$.
			\end{proof}	
			
			Let $E$ be the event that property (e) holds. Next we will show that 
			\begin{equation}\label{eq:conditional}
				\mathbb{P}[E | C_0(\mathcal{P})=C]=1-o(1) \; \text{for every typical } C.
			\end{equation}
			This implies that $E$ holds with probability $1-o(1)$. 
			
			Indeed, by Claim~\ref{claim:e1}, $\mathbb{P}[C_0(\mathcal{P})\text{ is typical}]=\sum_{C\text{ is typical}}\mathbb{P}[C_0(\mathcal{P})=C]=(1-o(1))$ and so
			\begin{align*}
				\mathbb{P}[E]&\geq \sum_{C\text{ is typical}}\mathbb{P}(C_0(\mathcal{P})=C)\mathbb{P}[E | C_0(\mathcal{P})=C]\\
				&\stackrel{(\ref{eq:conditional})}{\geq}(1-o(1)) \sum_{C\text{ is typical}}\mathbb{P}(C_0(\mathcal{P})=C)\geq 1-o(1).			
			\end{align*}
			It remains to prove~(\ref{eq:conditional}).

			Denote by $(\Omega, \mathcal{F}, \mathbb{P})$ the space of all partitions of $V(S)$ with $\mathbb{P}[v\in C_i]=p_i$ for $i\in[0,t]$ and $\mathbb{P}[v\in R]=\gamma$, and for fixed $C$ let $(\Omega, \mathcal{F}, \mathbb{P}_C)$ to be the space of all partitions of $V(S)$ with probability function $\mathbb{P}_C(A)=\mathbb{P}(A | C_0(\mathcal{P})=C)$.
				
			With this notation we need to show that $\mathbb{P}_{C}(E)=1-o(1)$ for every typical $C$. 
			
			Recall that $\overline{C}=V(S)/C$ and for all $v\in \overline{C}$ let 
			$$\chi(v)=\begin{cases}
					1, \text{ if } v\in C_1\\
					0, \text{ otherwise}.
				\end{cases}$$ 
			Note that for all $v\in \overline{C}$
				\begin{align*}
					\mathbb{P}_C(\chi(v)=1)&=\mathbb{P}_C(v\in C_1)=\mathbb{P}(v \in C_1 | C_0(\mathcal{P})=C)=\frac{\mathbb{P}(v \in C_1 \wedge C_0(\mathcal{P})=C)}{\mathbb{P}(C_0(\mathcal{P})=C)}\\ &=\frac{p_1\cdot p_0^{|C|}(1-p_0)^{m-|C|-1}}{p_0^{|C|}(1-p_0)^{m-|C|}} =\frac{p_1}{1-p_0}=q.
				\end{align*}
			Then by (\ref{eq:p_0})
				\begin{equation}\label{eq:q}
					q=(1\pm \varepsilon^{0.3})p_1.
				\end{equation}
				Moreover, since for fixed $v\in V(S)$ the event $\{v\in C_1\}$ was in the ``initial'' space $(\Omega, \mathcal{F}, \mathbb{P})$ independent of the outcome of random experiment for the remaining vertices $w\in V(S)\setminus\{v\}$, we infer that random variables $\{\chi(v) : v\in \overline{C}\}$ are mutually independent.
				
				Therefore for the rest of the proof we assume that typical $C$ with $L_0\subset C$ is fixed and all events and random variables are considered in the space $(\Omega, \mathcal{F}, \mathbb{P}_C)$.
				
			For a typical $C$ define 
					\begin{equation}\label{eq:M_C}
						M=M(C)=\{v\in \overline{C}: d_{S[L_0,\overline{C}]}(v)=(1\pm\varepsilon^{0.2})\ell_0\}.
					\end{equation}
					Recall that since $C$ is typical we have 
					\begin{equation}\label{eq:|C|}
						|C|=(1+o(1))p_0m, \text{ and } |\overline{C}|=(1-o(1))(1-p_0)m.
					\end{equation}
					Then by Claim~\ref{claim:count} with $\alpha=\varepsilon^{0.2}$ 
					\begin{equation}\label{ineq:|M_C|}
						|M|\geq \overline{C}-\frac{|C|}{\varepsilon^{0.2}}\stackrel{(\ref{eq:|C|})}{=}|\overline{C}|-\frac{|\overline{C}|p_0}{\varepsilon^{0.2}(1-p_0)}(1-o(1))\stackrel{(\ref{eq:p_0})}{\geq}(1-\varepsilon^{0.2})|\overline{C}|. 
				\end{equation}
				Note that $M$ is independent of choice of $C_1$ and is fully determined by $C$ and $S$.
				
				Next we verify that certain events $E^{(1)}$, $E^{(2)}$, $E^{(3)}$ hold asymptotically almost surely and that $E^{(1)}\wedge E^{(2)}\wedge E^{(3)}\subseteq E$. Let event $E^{(1)}$ be defined as 
				$$E^{(1)}: |M\cap C_1|\geq (1-\varepsilon^{0.1})|C_1| .$$
				
				Then $|C_1|\sim \text{Bi}(|\overline{C}|,q)$ and $|M\cap C_1|\sim \text{Bi}(|M|,q)$, so $$\mathbb{E}(|C_1|)=|\overline{C}|q \text{ and } \mathbb{E}(|M\cap C_1|)\stackrel{(\ref{ineq:|M_C|})}{\geq} (1-\varepsilon^{0.2})|\overline{C}|q.$$ Hence Theorem~\ref{thm:Chernoff} implies that with probability $1-o(1)$ we have $|M\cap C_1|/|C_1|\geq 1-\varepsilon^{0.1}$ and so $\mathbb{P}_C[E^{(1)}]=1-o(1)$.

				Now for every $v\in \overline{C}$ let $N(v)$ be the random variable that equals to the number of hyperedges $\{v,x,w\}$, where $x\in L_0$ and $w\in C_1$. Then $N(v)\sim \text{Bi}(d_{S[L_0,\overline{C}]}(v), q)$ for all $v\in \overline{C}$.
				
				Let $E^{(2)}$ be the event $$E^{(2)}: N(v)=(1\pm \varepsilon^{0.1})\ell_0p_1 \text{ for all } v\in M.$$ 
				
				For every $v\in M$, we have 
				$N(v)\sim \text{Bi}(d_{S[L_0,\overline{C}]}(v), q)$ and so $\mathbb{E}(N(v))=(1\pm2\varepsilon^{0.2})\ell_0p_1$ by (\ref{eq:M_C}) and (\ref{eq:q}). Then Theorem~\ref{thm:Chernoff} combined with union bound implies $\mathbb{P}_C[E^{(2)}]=1-o(1)$.
		
				Let $E^{(3)}$ be the event $$E^{(3)}: N(v)\leq 2\ell_0p_1 \text{ for all }v\in \overline{C}.$$ For every $v\in \overline{C}$ we have $N(v)\sim \text{Bi}(d_{S[L_0,\overline{C}]}(v), q)$ and $d_{S[L_0,\overline{C}]}\leq \ell_0$, hence we always have $\mathbb{E}(N(v))\stackrel{(\ref{eq:q})}{\leq} (1+\varepsilon^{0.3})\ell_0p_1.$
				%If $d_{S[L_0,\overline{C}]}(v)\geq 2\ell_0 p_1$, then (\ref{ineq:elli}), (\ref{eq:p_i}) and Theorem~\ref{thm:Chernoff} imply
				%$\mathbb{P}_C[N(v)\leq 2\ell_0p_1]= 1-o(m^{-1}).$ If $d_{S[L_0,\overline{C}]}(v)< 2\ell_0 p_1$, then $N(v)\leq d_{S[L_0,\overline{C}]}(v)<2\ell_0p_1$ deterministically. 
				Therefore by Theorem~\ref{thm:Chernoff} and union bound we have $\mathbb{P}_C[E^{(3)}]= 1-o(1)$.	 
				
				 It remains to notice that for $v\in C_1$ we have $d_{S[L_0,C_1]}(v)=N(v)$ and so $E^{(1)}\wedge E^{(2)}\wedge E^{(3)}\subseteq E$. Therefore, $\mathbb{P}_{C}[E]\geq 1-o(1)$, finishing the proof of (\ref{eq:conditional}). 	
	\end{proof}
	{\bf Embedding of $T$.} We start with applying Lemma~\ref{lemma:partition} to $S$ obtaining a partition $\mathcal{P}=\{C_0,\ldots, C_t, R\}$ of $V(S)$ that satisfy properties (a)-(e) of Lemma~\ref{lemma:partition}. To simplify our notation we set $G_1=S[L_0,C_1]$ and for $i\in[2,t]$ $G_i=S[C_{i-1},C_{i}]$.
	\begin{itemize}
		\item[1)] We first verify that Lemma~\ref{lemma:partition} guarantees that the assumptions of Lemma~\ref{lemma:FR} and Lemma~\ref{lemma:AKS} are satisfied. These Lemmas then yield systems of stars $\mathcal{S}_i=\{S_i^{1}, \ldots, S_{i}^{p_i}\}$ which cover almost all vertices of $G_i$. (See Figure~\ref{fig:1}, where each star $\mathcal{S}_i^{j}$ is a single grey edge.)
		\item[2)] Let $F$ be the union of $T_0$ with $\mathcal{S}_i$'s. The ``almost cover'' property of $\mathcal{S}_i$'s then allows us to show that hyperforest $F$ contains a large connected component $T_1$ which contains almost all vertices of $T$. (See Figure~\ref{fig:1}, green and grey edges form $T_1$.)
		\item[3)] Finally, we extend $T_1$ into a full copy of $T$ in a greedy procedure using the vertices of $R$. (See Figure~\ref{fig:1}, vertices of $R$ are blue.)
	\end{itemize}
		\begin{figure}[H]
		\begin{center}
			\begin{tikzpicture}[line cap=round,line join=round,>=triangle 45,x=1.0cm,y=1.0cm, scale=1]
			\clip(-7,-5) rectangle (7,5);
			\begin{scope}[xshift=0cm]
			\def\X{7cm};
			\def\AA{25};
			\def\AB{155};
			\def\AC{-10};
			\def\AD{-170};
			\def\AE{50};
			\def\AF{-80};
			\def\Y{5cm};
			\draw (0,0) ellipse ({\X} and {\Y});
			\draw [fill=black] (\AA:{\X} and {\Y}) circle (0pt);
			\draw [fill=black] (\AB:{\X} and {\Y}) circle (0pt);	
			
			\draw [fill=black] (\AC:{\X} and {\Y}) circle (0pt);	
			\draw [fill=black] (\AD:{\X} and {\Y}) circle (0pt);	
			\draw [fill=black] (\AE:{\X} and {\Y}) circle (0pt);
			\draw [fill=black] (\AF:{\X} and {\Y}) circle (0pt);
			\draw[color=black] (\AA:{\X} and {\Y})--(\AB:{\X} and {\Y});
			\draw (\AC:{\X} and {\Y}) .. controls (0,-1) and (0,-1) .. (\AD:{\X} and {\Y});
			\draw[color=black] (\AE:{\X} and {\Y})--(\AF:{\X} and {\Y});
			\filldraw[draw=white, fill=white] 
			(\AE:{\X} and {\Y})--(\AF:{\X} and {\Y})--(\AC:{\X} and {\Y})--(\AA:{\X} and {\Y})--cycle; 
			\draw[color=black] (\AE:{\X} and {\Y})--(\AF:{\X} and {\Y});
			%T_0 
			\def \B{(0,0.9*\Y)}
			\def \BA{(100:{0.8*\X} and {0.8*\Y})}
			\def \BB{(80:{0.8*\X} and {0.8*\Y})}
			\def \BAA{(-0.3*\X, 0.6*\Y)}
			\def \BAB{(-0.2*\X, 0.6*\Y)}
			\def \BBA{(0.2*\X, 0.6*\Y)}
			\def \BBB{(0.3*\X, 0.6*\Y)}
			\filldraw[draw=green, fill=green, fill opacity=0.5, draw opacity=0.25] \B--\BA--\BB--cycle;
			\filldraw[draw=green, fill=green, fill opacity=0.5, draw opacity=0.25] \BA--\BAA--\BAB--cycle;
			\filldraw[draw=green, fill=green, fill opacity=0.5, draw opacity=0.25] \BB--\BBA--\BBB--cycle;
			%C_1
			\def \BAAA{(-0.8*\X, 0.1*\Y)}
			\def \BAAB{(-0.6*\X, 0.1*\Y)}
			\def \BABA{(-0.4*\X, 0.1*\Y)}
			\def \BABB{(-0.2*\X, 0.1*\Y)}
			\def \BBAA{(0.2*\X, 0.1*\Y)}
			\def \BBAB{(0.4*\X, 0.1*\Y)}
			\def \BBBA{(0.6*\X, 0.1*\Y)}
			\def \BBBB{(0.8*\X, 0.1*\Y)}
			\filldraw[draw=gray, fill=gray, fill opacity=0.5, draw opacity=0.25] \BAA--\BAAA--\BAAB--cycle;
			\filldraw[draw=gray, fill=gray, fill opacity=0.5, draw opacity=0.25] \BAB--\BABA--\BABB--cycle;
			\filldraw[draw=gray, fill=gray, fill opacity=0.5, draw opacity=0.25] \BBA--\BBAA--\BBAB--cycle;
			\filldraw[draw=blue, fill=blue, fill opacity=0.5, draw opacity=0.25] \BBB--\BBBA--\BBBB--cycle;
			% C_2
			\def \BAAAA{({cos(180)*0.9*\X},{-0.3*\Y})}
			\def \BAAAB{({cos(-160)*0.9*\X}, {sin(360-160)*0.3*\Y-0.3*\Y})}
			\def \BAABA{({cos(-150)*0.9*\X}, {sin(-150)*0.3*\Y-0.3*\Y})}
			\def \BAABB{({cos(-140)*0.9*\X}, {sin(-140)*0.3*\Y-0.3*\Y})}
			\def \BABAA{({cos(-130)*0.9*\X}, {sin(-130)*0.3*\Y-0.3*\Y})}
			\def \BABAB{({cos(-120)*0.9*\X}, {sin(-120)*0.3*\Y-0.3*\Y})}
			\def \BABBA{({cos(-110)*0.9*\X}, {sin(-110)*0.3*\Y-0.3*\Y})}
			\def \BABBB{({cos(-100)*0.9*\X}, {sin(-100)*0.3*\Y-0.3*\Y})}
			\def \BBAAA{({cos(-90)*0.9*\X}, {sin(-90)*0.3*\Y-0.3*\Y})}
			\def \BBAAB{({cos(-80)*0.9*\X}, {sin(-80)*0.3*\Y-0.3*\Y})}
			\def \BBABA{({cos(-70)*0.9*\X}, {sin(-70)*0.3*\Y-0.3*\Y})}
			\def \BBABB{({cos(-60)*0.9*\X}, {sin(-60)*0.3*\Y-0.3*\Y})}
			\def \BBBAA{({cos(-50)*0.9*\X}, {sin(-50)*0.3*\Y-0.3*\Y})}
			\def \BBBAB{({cos(-40)*0.9*\X}, {sin(-40)*0.3*\Y-0.3*\Y})}
			\def \BBBBA{({cos(-25)*0.9*\X}, {sin(-25)*0.3*\Y-0.3*\Y})}
			\def \BBBBB{({cos(0)*0.9*\X}, {sin(0)*0.3*\Y-0.3*\Y})}
			\filldraw[draw=gray, fill=gray, fill opacity=0.5, draw opacity=0.25] \BAAA--\BAAAA--\BAAAB--cycle;
			\filldraw[draw=gray, fill=gray, fill opacity=0.5, draw opacity=0.25] \BAAB--\BAABA--\BAABB--cycle;
			\filldraw[draw=gray, fill=gray, fill opacity=0.5, draw opacity=0.25] \BABA--\BABAA--\BABAB--cycle;
			\filldraw[draw=gray, fill=gray, fill opacity=0.5, draw opacity=0.25] \BABB--\BABBA--\BABBB--cycle;
			\filldraw[draw=gray, fill=gray, fill opacity=0.5, draw opacity=0.25] \BBAA--\BBAAA--\BBAAB--cycle;
			\filldraw[draw=blue, fill=blue, fill opacity=0.5, draw opacity=0.25] \BBAB--\BBABA--\BBABB--cycle;
			\filldraw[draw=blue, fill=blue, fill opacity=0.5, draw opacity=0.25] \BBBA--\BBBAA--\BBBAB--cycle;
			\filldraw[draw=blue, fill=blue, fill opacity=0.5, draw opacity=0.25] \BBBB--\BBBBA--\BBBBB--cycle;
			% Points
			\draw [fill=black] \B circle (2pt);
			\draw [fill=black] \BA circle (2pt);
			\draw [fill=black] \BB circle (2pt);
			\draw [fill=black] \BAA circle (2pt);
			\draw [fill=black] \BAB circle (2pt);
			\draw [fill=black] \BBA circle (2pt);
			\draw [fill=black] \BBB circle (2pt);
			\draw [fill=black] \BAAA circle (2pt);
			\draw [fill=black] \BAAB circle (2pt);
			\draw [fill=black] \BABA circle (2pt);
			\draw [fill=black] \BABB circle (2pt);
			\draw [fill=black] \BBAA circle (2pt);
			\draw [fill=black] \BBAB circle (2pt);
			\draw [fill=blue] \BBBA circle (2pt);
			\draw [fill=blue] \BBBB circle (2pt);
			\draw [fill=black] \BAAAA circle (2pt);
			\draw [fill=black] \BAAAB circle (2pt);
			\draw [fill=black] \BAABA circle (2pt);
			\draw [fill=black] \BAABB circle (2pt);
			\draw [fill=black] \BABAA circle (2pt);
			\draw [fill=black] \BABAB circle (2pt);
			\draw [fill=black] \BABBA circle (2pt);
			\draw [fill=black] \BABBB circle (2pt);
			\draw [fill=black] \BBAAA circle (2pt);
			\draw [fill=black] \BBAAB circle (2pt);
			\draw [fill=blue] \BBABA circle (2pt);
			\draw [fill=blue] \BBABB circle (2pt);
			\draw [fill=blue] \BBBAA circle (2pt);
			\draw [fill=blue] \BBBAB circle (2pt);
			\draw [fill=blue] \BBBBA circle (2pt);
			\draw [fill=blue] \BBBBB circle (2pt);
			% Labels
			\draw (0,3) ellipse ({0.4*\X} and {0.1\Y});
			\draw [fill=black]  \BAA++(-1,0) circle (0pt) node {$L_0$};
			\draw [fill=black]  (0,0.47*\Y) circle (0pt) node {$C_0$};
			\draw [fill=black]  (0,0) circle (0pt) node {$C_1$};
			\draw [fill=black]  (0,-0.8*\Y) circle (0pt) node {$C_2$};
			\draw [fill=black]  (0.9*\X,0) circle (0pt) node {$R$};
			\draw [fill=black]  (-0.8*\X,0.8*\Y) circle (0pt) node {$S$};
			\end{scope}
			
			\end{tikzpicture}
		\end{center}
		\caption{Case $d=1$ and $t=2$. Green edges form $T_0$, grey edges are hyperstars $S_i^j$, blue edges are constructed by using vertices in reservoir $R$.}\label{fig:1}
	\end{figure}
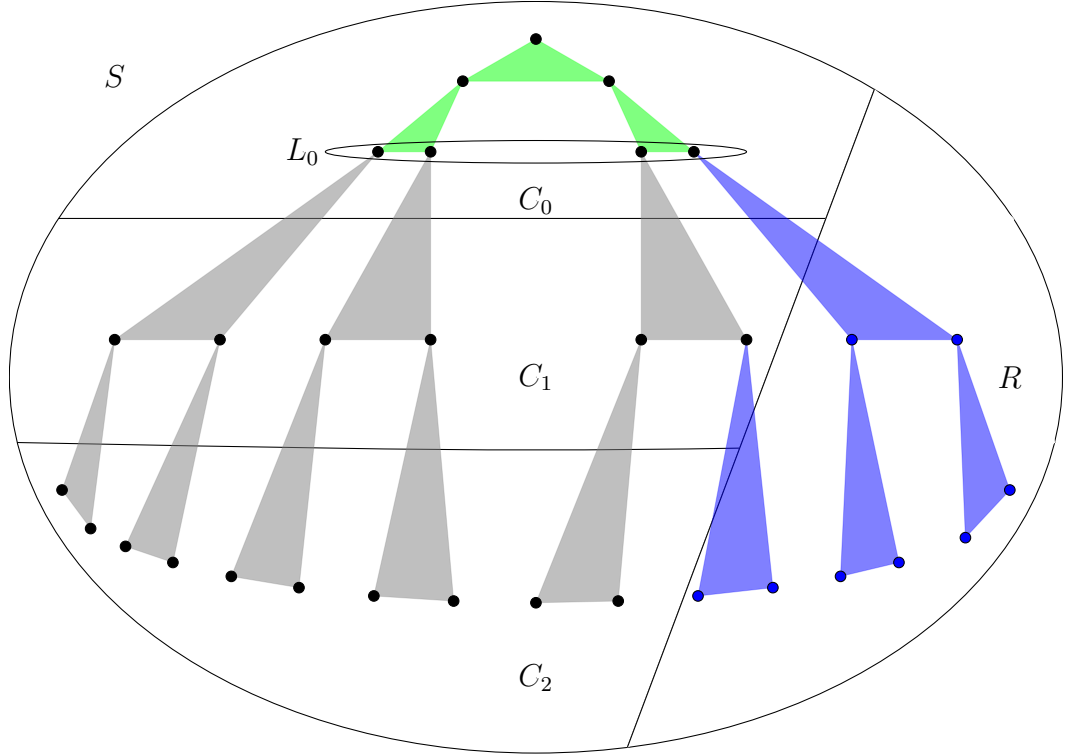	
	{\bf Step 1.} Construction of hyperforest $F$. 
	
	We start with applying Lemma~\ref{lemma:partition} to $S$ and obtaining a partition $\mathcal{P}=\{C_0,\ldots, C_t, R\}$ of $V(S)$ that satisfies properties (a)-(e) of Lemma~\ref{lemma:partition}. To simplify our notation, let $G_1=S[L_0,C_1]$ and for $i\in[2,t]$ let $G_i=S[C_{i-1},C_i]$. 
	
	Let $N_i=|V(G_i)|$, then $N_1=\ell_0+|C_1|$ and $N_{i}=|C_{i-1}|+|C_{i}|$ for $i\in[2,t]$. Due to property (a) of Lemma~\ref{lemma:partition} we have that for sufficiently large $m$ and for all $i\in[t]$
	\begin{equation}\label{eq:N_i}
		N_i=(1\pm \varepsilon)(\ell_{i-1}+\ell_{i})\leq(1\pm \varepsilon)\left(\frac{\ell_i}{2d}+\ell_{i}\right)\leq 2\ell_i.
	\end{equation} 
	In what follows we will show that $G_1$ satisfies assumptions of Lemma~\ref{lemma:FR} and for $i\in[2,t]$, $G_i$ satisfies assumptions of Lemma~\ref{lemma:AKS}. 
	
	We start with $G_1$. Recall that given $\mu$ we defined $\delta$ (see (\ref{eq:rho+delta})), and $\varepsilon_{3.1}$ was a constant guaranteed by Lemma~\ref{lemma:FR} with $\delta$ and $k=2$ as input. We then defined $\varepsilon$ (see (\ref{ineq:epsilon})) to be small enough, in particular such that $\varepsilon_{3.1}\geq\varepsilon^{0.1}$. We will now show that $G_1$ satisfies conditions of Lemma~\ref{lemma:FR} with $D=D_1=p_1\ell_0$ and $N=N_1$. %(and $\delta_{3.1}=\delta$, $k_{3.1}=2$, $\varepsilon_{3.1}$, $d_{3.1}=d$).
	
	To verify condition (i) of Lemma~\ref{lemma:FR} it is enough to recall that $G_1=S[L_0,C_1]$ and so (i) holds with $X=L_0$ and $Y=C_1$.
	
	Note that condition (iii) is guaranteed by property (e), since for all $v\in C_1=Y$ we have $d_{G_1}(v)\leq 2D$.
	
	We now verify condition (ii) of Lemma~\ref{lemma:FR}. Property (b) with $i=1$ guarantees that for all $v\in X=L_0$
	$$d_{G_1}(v)=d\left(D\pm K\sqrt{D\ln D}\right).$$
	Since $D=p_1\ell_0\stackrel{(\ref{ineq:elli}),(\ref{eq:p_i})}{=}\Omega(\sqrt{n})$, for large enough $n$ we have for all $v\in X=L_0$
	\begin{equation}\label{eq:d_{G_1}(x)}
		d_{G_{1}}(v)=dD(1\pm \varepsilon_{3.1}).	
	\end{equation} 	
	Property (e) in turn guarantees that for all but at most $\varepsilon^{0.1}|C_1|\leq \varepsilon_{3.1}N_1$ vertices $v\in Y=C_1$ we have 
	\begin{equation}\label{eq:d_{G_1}(y)}
		d_{G_{1}}(v)=(1\pm\varepsilon^{0.1})p_1\ell_0=D(1\pm \varepsilon_{3.1}).
	\end{equation} 
	Now, (\ref{eq:d_{G_1}(x)}) and (\ref{eq:d_{G_1}(y)}) imply that $G_1$ satisfies condition (ii) of Lemma~\ref{lemma:FR}.
	
	Lemma~\ref{lemma:FR} produces a collection $\{S^{1}_1, \ldots, S^{1}_{p_1}\}$ of disjoint hyperstars of $G_1$ centered at vertices of $L_0$, each $S^1_j$ has at most $d$ hyperedges, and 
	\begin{equation}\label{eq:S_1}
		\text{star forest $\mathcal{S}_1=\bigcup_{j=1}^{p_1}S^{1}_j$ 
			covers all but at most $\delta N_1$ vertices of $G_1$.}
	\end{equation}
	
	Similarly for all $i\in[2,t]$, hypergraph $G_i$ satisfies assumptions of Lemma~\ref{lemma:AKS} with the parameters $K_{3.2}=8, d_{3.2}=d$ and $D=D_i=p_{i}\ell_{i-1}$. Indeed, condition (i) of Lemma~\ref{lemma:AKS} is guaranteed by taking $X=C_{i-1}$ and $Y=C_{i}$, and condition (ii) is guaranteed by properties (b) and (c) of Lemma~\ref{lemma:partition}.
	
	Then for $i\in[2,t]$, Lemma~\ref{lemma:AKS} when applied to $G_i$ yields a collection $\{S^{i}_1, \ldots, S^{i}_{p_i}\}$ of disjoint hyperstars centered at vertices of $C_{i-1}$, each $S^{i}_j$ has at most $d$ hyperedges and star forest $\mathcal{S}_i=\bigcup_{j=1}^{p_{i}}S_j$ 
	covers all but at most $O(N_iD_{i}^{-1/2}\ln^{3/2}D_{i})$ vertices of $G_i$.
	Once again recall that for $i\in[2,t]$ we have $D_i=p_i\ell_{i-1}\stackrel{(\ref{ineq:elli}),(\ref{eq:p_i})}{=}\Omega(n)$ and so for large enough $n$, 
	\begin{equation}\label{eq:S_i}
		\text{{$\mathcal{S}_{i}$ covers all but at most $\varepsilon N_i$ vertices of $G_i$.}}
	\end{equation}

	Now, let $F=T_0\cup\bigcup_{i=1}^{t}\mathcal{S}_{i}$, then $F$ is a hyperforest and we will find a hypertree $T_1\subseteq F$ such that $T_1$ is an almost spanning subhypertree of $T$.
	
	We now will estimate $|V(F)|$. Since for $i\in[2,t]$ a star forest $\mathcal{S}_{i}$ misses at most $\varepsilon N_{i}$ vertices of $C_i$ and $\mathcal{S}_{1}$ misses at most $\delta N_1$ vertices of $C_1$ we have:
	$$|V(F)|\geq |V(T_0)|+|C_1|-\delta N_1+\sum_{i=2}^{t}\left(|C_{i}|-\varepsilon N_i\right).$$
	
	Now, by property (a) of Lemma~\ref{lemma:partition}, for large enough $m$ and for any $i\in[t]$, we have $|C_i|=(1\pm\varepsilon)\ell_i$. Therefore
	$$	|V(F)|\geq |V(T_0)|+\sum_{i=1}^{t}(1-\varepsilon)\ell_i-\delta N_1 -\varepsilon\sum_{i=2}^{t}N_i
		  \stackrel{(\ref{eq:N_i})}{\geq} |V(T_0)|+\sum_{i=1}^{t}\ell_i-(\varepsilon+2\delta)\ell_1-3\varepsilon\sum_{i=2}^{t}\ell_{i}.$$
	Since $|V(T_0)|+\sum_{i=1}^{t}\ell_{i}= |V(T)|$ and $\varepsilon<\delta$ we have 
	\begin{equation}\label{ineq:|V(F)|}
		|V(F)|\geq |V(T)|-3\delta\sum_{i=1}^{t}\ell_{i}\geq(1-3\delta)|V(T)|.
	\end{equation}
	
	{\bf Step 2.} Embedding most of $T$.
	\begin{claim}\label{claim:sizeT1}
		$S$ contains a hypertree $T_1\subseteq F$ such that $T_1$ is a subhypertree of $T$ and $|E(T_1)|\geq (1-20\delta)|E(T)|$.
	\end{claim}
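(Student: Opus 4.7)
The plan is to build $T_1$ greedily level-by-level from the hyperforest $F$ and to bound the edge deficit $|E(T)| - |E(T_1)|$ by a recursion across the levels. Initialize $T_1$ to contain all of $T_0$, which is embedded in $F$ by Lemma~\ref{lemma:partition}(e). Then for each $i \in [t]$, suppose $T_1$ has been defined through level $L_{i-1}$ with every vertex $u$ at level $L_{i-1}$ of $T_1$ embedded at some $u' \in C_{i-1}$ (taking $u' \in L_0 \subset C_0$ for the base case $i = 1$); attach to $u$ as children in $T_1$ the two non-center endpoints of each edge of the star of $\mathcal{S}_i$ centered at $u'$, if any such star exists. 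Because each star of $\mathcal{S}_i$ has at most $d$ edges, the stars within each $\mathcal{S}_i$ are vertex-disjoint off their centers, and the $\mathcal{S}_i$ live in the pairwise disjoint classes $C_{i-1} \cup C_i$, this produces a well-defined hypertree $T_1 \subseteq F$ isomorphic to a subhypertree of $T$.

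Let $\ell_i^{(1)}$ be the number of vertices of $T_1$ at level $L_i$ and set $M_i = \ell_i - \ell_i^{(1)}$, noting that $M_0 = 0$ because $L_0 \subseteq T_0 \subseteq T_1$. My aim is to show $M_i \leq 3\delta \ell_i$ for every $i \in [t]$. An edge of $\mathcal{S}_i$ is used in $T_1$ precisely when its center in $C_{i-1}$ lies in the embedding of $T_1 \cap L_{i-1}$; the remaining edges are wasted, and since every vertex of $C_{i-1}$ centers at most $d$ edges, the wasted edges number at most $d(|C_{i-1}| - \ell_{i-1}^{(1)})$. Combined with the coverage bounds (\ref{eq:S_1}) and (\ref{eq:S_i}) and the concentration $|C_i| = (1 \pm o(1))\ell_i$ from Lemma~\ref{lemma:partition}(a), a short edge count yields $M_1 \leq 2\delta \ell_1 + o(\ell_1)$ for $i = 1$ and, for $i \geq 2$, the recursion
\[
  M_i \leq 2d\, M_{i-1} + 2\varepsilon \ell_i + o(\ell_i),
\]
whose first term records the $2d$ descendants at $L_i$ of each vertex missing at $L_{i-1}$ and whose second term records fresh losses produced by the imperfect coverage of $\mathcal{S}_i$.

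Unrolling the recursion via the identity $(2d)^{i-j}\ell_j = \ell_i$ gives $M_i \leq (2\delta + 2\varepsilon t)\ell_i + o(\ell_i)$, and since $t \leq 1 + \log(1/\varepsilon)$ by (\ref{ineq:t}) while $\varepsilon \ll \delta$ by (\ref{ineq:epsilon}), this is at most $3\delta \ell_i$ for $n$ sufficiently large. Consequently
\[
  |E(T)| - |E(T_1)| = \sum_{i=1}^{t} \frac{M_i}{2} \leq \frac{3\delta}{2} \sum_{i=1}^{t} \ell_i \leq 3\delta \cdot |E(T)|,
\]
well within the required $20\delta$ slack. The main obstacle to circumvent is the $2d$-factor in the recursion, which would naively inflate losses geometrically across the $t$ levels; it is neutralized by the matching growth rate $\ell_i = 2d \ell_{i-1}$ (so a fractional loss stays a fractional loss rather than blowing up in absolute terms) and by the strong separation $\varepsilon \ll \delta$, which keeps the accumulated fresh losses $\sum_j O(\varepsilon \ell_j) = O(\varepsilon t \ell_i)$ comfortably below $\delta \ell_i$.
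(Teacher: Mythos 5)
Your proof is correct and follows essentially the same approach as the paper: both take $T_1$ to be the part of $F$ reachable from the root and bound the loss from uncovered vertices propagating down the levels, using the identity $(2d)^{i-j}\ell_j=\ell_i$ together with the coverage bounds \eqref{eq:S_1}, \eqref{eq:S_i} and $t\le 1+\log(1/\varepsilon)$. The only cosmetic difference is bookkeeping — you unroll a level-by-level recursion for the deficits $M_i$, while the paper sums directly over orphan vertices and their descendant subtrees — and these yield the same estimate.
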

	\begin{proof}
		Recall that while $V(T)=\bigcup_{i=0}^{h}V_{i}$, the forest $F$ has the vertex set $V(F)=\bigcup_{i=0}^{i_0}V_{i}\cup \bigcup_{i=1}^{t}C_{i}$, where we have set $t=h-i_{0}$ and $V_{i_0}=C_0=L_0$. Also recall that $V_0=\{v_{0}\}$ was a root of $T$ (and $F$).
		
		For a non-root vertex $v\in V_{i}$ (or $v\in C_i$) a parent of $v$ is a unique vertex $u\in V_{i-1}$ (or $u\in C_{i-1}$) such that $\{u,v,w\}\in F$. If a vertex $v$ has a parent $u$ we will write $p(v)=u$, if a vertex $v$ has no parent we will say that $v$ is an orphan. 
		
		For each $v\in V(F)\setminus\{v_0\}$ consider ``a path of ancestors'' $v=a_{i}, a_{i-1},\ldots, a_1=a^{*}$, i.e., a path satisfying $p(a_j)=a_{j-1}$, $j\in [2,i]$ and such that $a^{*}=a^{*}(v)$ is an orphan in $F$. 
					
		Let $T_1\subseteq F$ be a subtree of $F$ induced on a set $\{v\in V(F): a^{*}(v)=v_{0}\}$. Note that if for some $v\in V(F)$ we have $a^{*}(v)\neq v_{0}$, then  $a^{*}(v)\not \in V(T_{0})$.			
					
		For $i\in [t]$ let $U_{i}\subseteq C_i$ be the set of vertices not covered by $\mathcal{S}_{i}$. Then
		\begin{equation}\label{eq:|U_i|}
			\text{$|U_1|\stackrel{(\ref{eq:S_1})}{\leq} \delta N_1$ and   
				$|U_{i}|\stackrel{(\ref{eq:S_i})}{\leq} \varepsilon N_{i}$ for all $i\in [2,t]$.}
		\end{equation}
		Note that all orphan vertices, except of $v_0$, belong to $\bigcup_{i=1}^{t} U_{i}$ as they were not covered by some $\mathcal{S}_i$. In particular, for every $v\not \in V(T_1)$ its ancestor $a^{*}(v)$ is an orphan and hence belongs to $\bigcup_{i=1}^{t} U_{i}$
		
		For an orphan vertex $a^{*}$ let $T(a^{*})$ be a subtree of $F$ rooted at $a^{*}$. Then for every orphan vertex $a^{*}\in U_{i}$ we have
		$$|V(T(a^{*}))|\leq 1+2d+\ldots+(2d)^{t-i}\leq 3(2d)^{t-i}.$$
		Finally, every $v\not \in V(T_1)$ is in $T(a^{*}(v))$, where $a^{*}(v)\in U_i$ for some $i\in[t]$, and therefore we have 
		$$ |V(T_1)| \geq |V(F)|-\sum_{i=1}^{t}|U_{i}|\cdot 3(2d)^{t-i}\stackrel{(\ref{ineq:|V(F)|}), (\ref{eq:|U_i|})}{\geq}(1-3\delta)|V(T)|-3\delta N_1 (2d)^{t-1}-3\varepsilon\sum_{i=2}^{t}N_{i}(2d)^{t-i}.$$
		Now, by (\ref{eq:N_i}), we have $N_{i}\leq 2 \ell_i$ and, by (\ref{ineq:elli}), $\ell_{i}(2d)^{t-i}=\ell_{t}$ for all $i\in[t]$, and so
	 	$$|V(T_1)|\geq (1-3\delta)|V(T)|-6\delta \ell_t -6\varepsilon(t-1)\ell_{t}.$$
	 	
		Recall that $\ell_{t}$ is the size of the last level of $T$, so $\ell_{t}\leq |V(T)|$. Also $t\leq 1+\log(\frac{1}{\varepsilon})$, and since $\varepsilon$ is sufficiently small $6\varepsilon(t-1)\leq \sqrt{\varepsilon}\leq \delta$ and so 
		$$|V(T_1)|\geq (1-10\delta)|V(T)|.$$
		Finally, for every hypertree $T^\prime$ we have $|V(T^\prime)|=2|E(T^\prime)|+1$, so we have
		\begin{equation*}
			|E(T_1)|\geq (1-20\delta)|E(T)|. \qedhere
		\end{equation*}
\end{proof}
{\bf Step 3.} Finally, we complete $T_1$ to a full copy of $T$ by using reservoir $R$. 
\begin{claim}\label{claim:reservoir}
	Assume that $\mathcal{P}$ is a partition guaranteed by Lemma~\ref{lemma:partition} and $T_1$ be a hypertree guaranteed by Claim~\ref{claim:sizeT1}. Then $T_1$ can be extended to a copy of $T$ in $S$.  
\end{claim}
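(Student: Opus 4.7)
The plan is to complete $T_1$ to a full copy of $T$ by processing the missing edges in top-down breadth-first order and filling each new child-pair with vertices from the reservoir $R$. The embedding of $T_1$ in $S$ identifies $T_1$ with a rooted subhypertree of $T$ containing the root $v_0$; the remaining edges of $T$, together with their unembedded descendants, form subtrees hanging off vertices already placed by $T_1$. We process these missing edges one at a time, always choosing an edge whose parent vertex has already been placed (so either in $V(T_1)$ or embedded in an earlier extension step).

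At each step, let $v \in V(S)$ be the already-embedded parent vertex whose next child-edge $\{v,u,w\}$ we wish to realize, and let $W \subseteq R$ be the set of reservoir vertices already used in previous extension steps. We want to find $\{u,w\} \subseteq R \setminus W$ with $\{v,u,w\} \in E(S)$. Property (d) of Lemma~\ref{lemma:partition} gives $d_{S[\{v\}\cup R]}(v) \geq \rho m$, so there are at least $\rho m$ edges of $S$ of the form $\{v,u',w'\}$ with $u',w' \in R$. Because $S$ is a Steiner triple system, each vertex of $R$ lies in at most one edge with $v$, so each element of $W$ blocks at most one of these candidate edges, leaving at least $\rho m - |W|$ still usable.

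The total number of vertices we ever take from $R$ equals $|V(T)| - |V(T_1)| = 2(|E(T)| - |E(T_1)|) \leq 40\delta |E(T)| \leq 20\delta |V(T)| \leq 20\delta n$, by Claim~\ref{claim:sizeT1}. By the choice (\ref{eq:rho+delta}) of $\delta$ and the assumption $m \geq (1+\mu)n$, we have $20\delta n = (1+\mu)\rho n \leq \rho m$, so $|W| < \rho m$ holds throughout the extension process. Hence at each step at least one candidate edge remains, and the greedy procedure successfully extends $T_1$ to a full copy of $T$ in $S$.

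The main obstacle is this bookkeeping: ensuring that $R$ is never exhausted relative to the vertex $v$ currently being extended. It hinges on property (d) guaranteeing a linear-in-$m$ number of edges from each vertex into $R$, on the STS property ensuring that each used reservoir vertex blocks at most one such edge through $v$, and on the constants in (\ref{eq:rho+delta}) being calibrated precisely so that $20\delta n \leq \rho m$. Everything else is routine: the top-down order guarantees that the parent of every edge we add is available, and the symmetry of $u$ and $w$ in each edge means no further compatibility condition has to be checked.
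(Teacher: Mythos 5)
Your proof is correct and follows essentially the same route as the paper: order the missing edges so each parent is already embedded, use property (d) to get $\rho m$ edges from the current vertex into $R$, note that each previously used reservoir vertex blocks at most one such edge (the STS/linearity property), and check that the total reservoir consumption, bounded via Claim~\ref{claim:sizeT1} and the choice of $\delta$ in (\ref{eq:rho+delta}), stays below $\rho m$. The only cosmetic difference is that you count used vertices while the paper counts edges (bounding $p-1\leq \rho m/2$); the arithmetic is equivalent.
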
	
\begin{proof}
	Since $T_1$ is a subhypertree of $T$, there is a sequence of hyperedges $\{e_1,\ldots, e_{p-1}\}$ such that each $T_{i}=T_1\bigcup_{j=1}^{i-1}e_{j}$ for $i\in[p]$ is a hypertree an $T_p\cong T$. Every vertex $v\in V(S)$ has degree at least $\rho m$ in $R$ (by Lemma~\ref{lemma:partition}) and 
	$$p-1=|E(T)|-|E(T_1)|\stackrel{Claim~\ref{claim:sizeT1}}{\leq} 20\delta|E(T)|\leq 10\delta n \stackrel{(\ref{eq:rho+delta})}{\leq} \frac{\rho m}{2}.$$
	Hence we can greedily embed edges $e_1, \ldots, e_{p-1}$. Indeed, having embedded the edges $e_1,\ldots, e_{i-1}$ for some $i\in[p-1]$, for set $R_{i}=R\setminus \bigcup_{j=1}^{i-1}e_j$ and all $v\in V(S)$ we have
	$$d_{S[v\cup R_{i}]}\geq \rho m-2(i-1)>0$$
	allowing the greedy embeding to continue. Then the last hypertree $T_p$ is by construction isomorphic to $T$.
\end{proof}
	
\section{Concluding remarks}\label{sec:remarks}
We notice that with a similar proof one can verify Conjecture~\ref{conj:ER} for some other types of hypertrees.

Let $D=\{d_1, \ldots, d_{k}\}$ be a sequence of integers. Let $T$ be a tree rooted at $v_0$ and let $V(T)=V_0\sqcup V_1 \sqcup \ldots \sqcup V_{h}$ be a partition of $V(T)$ into levels, so that $V_i$ consist of vertices distance $i$ from $v_0$. We say that $T$ is \emph{$D$-ary hypertree} if for every $i\in[0,h-1]$ there is $j\in [k]$ such that for every $v\in V_i$ the forward degree of $v$ is $d_j$. In other words, forward degree of every non-leaf vertex of $T$ is in $D$, and depends only on the height of a vertex in $T$. 

Following the lines of the proof of Theorem~\ref{thm:main} one can conclude that for any finite set $D\subset \mathbb{N}$, and any $\mu$, any large enough STS $S$ contains any $D$-ary hypertree $T$, provided $|V(T)|\leq |V(S)|/(1+\mu)$.

\begin{bibdiv}
		\begin{biblist}
			
		\bib{AKS}{article}{
				author={Alon, Noga},
				author={Kim, Jeong-Han},
				author={Spencer, Joel},
				title={Nearly perfect matchings in regular simple hypergraphs},
				journal={Israel J. Math.},
				volume={100},
				date={1997},
				pages={171--187},
				%   issn={0021-2172},
				%   review={\MR{1469109}},
				%   doi={10.1007/BF02773639},
			}
		
		\bib{AS}{book}{
			author={Alon, Noga},
			author={Spencer, Joel H.},
			title={The probabilistic method},
			series={Wiley Series in Discrete Mathematics and Optimization},
			edition={4},
			publisher={John Wiley \& Sons, Inc., Hoboken, NJ},
			date={2016},
			pages={xiv+375},
			%isbn={978-1-119-06195-3},
			%review={\MR{3524748}},
		}
			
		\bib{ARS}{article}{
				author={Arman, Andrii},
				author={R\"{o}dl, Vojt\v{e}ch},
				author={Sales, Marcelo Tadeu},
				title={Colourful matchings, \emph{\href{https://arxiv.org/abs/2102.09633}{https://arxiv.org/abs/2102.09633}}},
				%journal={Adv. in Math.},
				%volume={63},
				date={2021},
				%number={3},
				%pages={247--253},
				%   issn={0001-8708},
				%   review={\MR{877785}},
				%   doi={10.1016/0001-8708(87)90055-7},
			}
		
			\bib{ER}{article}{
				author={Elliott, Bradley},
				author={R\"{o}dl, Vojt\v{e}ch},
				title={Embedding hypertrees into Steiner triple systems},
				journal={J. Combin. Des.},
				volume={27},
				date={2019},
				number={2},
				pages={82--105},
%				issn={1063-8539},
%				review={\MR{3891305}},
%				doi={10.1002/jcd.21641},
			}
			
			\bib{FR}{article}{
				author={Frankl, P.},
				author={R\"{o}dl, V.},
				title={Near perfect coverings in graphs and hypergraphs},
				journal={European J. Combin.},
				volume={6},
				date={1985},
				number={4},
				pages={317--326},
				%issn={0195-6698},
				%review={\MR{829351}},
				%doi={10.1016/S0195-6698(85)80045-7},
			}
			
			\bib{JLR}{book}{
				author={Janson, Svante},
				author={\L uczak, Tomasz},
				author={Rucinski, Andrzej},
				title={Random graphs},
				series={Wiley-Interscience Series in Discrete Mathematics and
					Optimization},
				publisher={Wiley-Interscience, New York},
				date={2000},
				pages={xii+333},
				%   isbn={0-471-17541-2},
				%   review={\MR{1782847}},
				%   doi={10.1002/9781118032718},
			}
						
			\bib{KR}{article}{
				author={Kostochka, A. V.},
				author={R\"{o}dl, V.},
				title={Partial Steiner systems and matchings in hypergraphs},
				booktitle={Proceedings of the Eighth International Conference ``Random
					Structures and Algorithms'' (Poznan, 1997)},
				journal={Random Structures Algorithms},
				volume={13},
				date={1998},
				number={3-4},
				pages={335--347},
				%   issn={1042-9832},
				%   review={\MR{1662789}},
				%   doi={10.1002/(SICI)1098-2418(199807)12:4<335::AID-RSA2>3.0.CO;2-U},
			}
		
			\bib{PS}{article}{
				author={Pippenger, Nicholas},
				author={Spencer, Joel},
				title={Asymptotic behavior of the chromatic index for hypergraphs},
				journal={J. Combin. Theory Ser. A},
				volume={51},
				date={1989},
				number={1},
				pages={24--42},
				%issn={0097-3165},
				%review={\MR{993646}},
				%doi={10.1016/0097-3165(89)90074-5},
			}
		\end{biblist}
	\end{bibdiv}

\end{document}